\theoremstyle{plain} \numberwithin{equation}{subsection}
\newtheorem{theorem}{Theorem}[section]
\newtheorem{lemma}[theorem]{Lemma}
\newtheorem{proposition}[theorem]{Proposition}
\newtheorem{proposition-definition}[theorem]{Proposition-Definition}
\theoremstyle{definition}
\newtheorem{remark}[theorem]{Remark}
\newlist{proofenum}{enumerate}{10}
\setlist[proofenum]{label=(\alph*), leftmargin=*, listparindent={0in}, parsep={.2in}}
\def\mf#1{{ \mathfrak{#1} }}
\def\mb#1{{ \mathbb{#1} }}
\def\mc#1{{ \mathcal{#1} }}
\def\tr#1{{ \textrm{#1} }}
\def\isom{{ \, \stackrel \sim \longrightarrow \, }}
\def\id{{ \tr{id} }}
\def\C{{\mb C}}
\def\N{{ \mb N }}
\def\Z{{ \mb Z }}
\def\str#1{{ \mc O_{#1} }}
\def\Um{{ U^- }}
\def\g{{ \mf g }}
\def\b{{ \mf b }}
\def\bminus{{ \mf b^- }}
\def\n{{ \mf n }}
\def\nm{{ \mf n^- }}
\def\L{{ \mc L }}
\def\dv{{ d \varphi }}
\def\dvi{{ d \varphi^{-1} }}
\def\barg{{ \bar U(\g) }}
\def\barb{{ \bar U(\b) }}
\def\barbm{{ \bar U(\bminus) }}
\def\barn{{ \bar U(\n) }}
\def\barnm{{ \bar U(\nm) }}
\def\bargf#1{{ \bar U_{\leq #1}(\g) }}
\def\barnmf#1{{ \bar U_{\leq #1}(\nm) }}
\def\dom{{ \Lambda^+ }}
\def\pos{{ \Delta^+ }}
\def\ho#1{{ H^0(#1) }}
\def\Ho#1{{ H^0 \big(  #1  \big) }}
\def\V#1{{ V(#1) }}
\def\Vbig#1{{ V \big( #1 \big) }}
\def\Vstar#1{{ V(#1^*) }}
\def\Vss#1{{ V(#1^*)^* }} 
\def\Va#1{{ V^a(#1) }}
\def\Vf#1#2{{ V_{\leq #1}(#2) }}
\def\Vs#1#2{{ V_{< #1}(#2) }}
\def\Vsbig#1#2{{ V_{< #1}\big( #2 \big) }}
\def\hof#1#2{{ H^0_{\geq #1}(#2) }}
\def\hos#1#2{{ H^0_{> #1}(#2) }}
\def\Hof#1#2{{ H^0_{\geq #1}\big( #2 \big) }}
\def\hoa#1{{ H^{0, \, a}(#1) }}
\def\gr{{ \tr{gr} }}
\def\kG{{ k[G] }}
\def\kum{{ k[U^-] }}
\def\kkum{{ k[U^-] \otimes k[U^-] }}
\def\chiw#1{{ \chi_{w_0 #1} }}
\def\hy{{ \tr{hy} }}
\def\hynm{{ \hy(\nm) }}
\def\hygg#1{{ \tr{hy}_{#1}(\g) }}
\def\hynmg#1{{ \tr{hy}_{#1}(\nm) }}
\def\hyg{{ \hy(\g) }}
\def\E#1#2{{ E_{#1}^{(#2)} }}
\def\F#1#2{{ F_{#1}^{(#2)} }}
\def\f#1#2{{ f_{#1}^{(#2)} }}
\def\Fo{{F_0}}
\def\fo{{ f_0 }}
\def\Foo{{ \sw{(\Fo)}1 }}
\def\Fot{{ \sw{(\Fo)}2 }}
\def\flag{{G/B}}
\def\tprho{{ 2(p-1)\rho }}
\def\pN{{ (p-1)N }}
\def\symn{{ S(\n) }}
\def\s{{ \bm s }}
\def\r{{ \mf r }}
\def\Id{{ I_\Delta }}
\def\Idp#1{{ I_\Delta^{#1} }}
\def\Ids{{ \mc I_\Delta }} 
\def\Idsp#1{{ \mc I_\Delta^{#1} }}
\def\Idm{{ I_{\Delta,-} }}
\def\Idmp#1{{ I_{\Delta,-}^{#1} }}
\def\fflag{{ \mc X }}
\def\dfflag{{ \Delta_\fflag }}
\def\cchi#1#2{{ \chi_{#1, \, #2} }} 
\def\Fd#1{{ \mc F^\Delta_{#1} }} 
\def\Fs#1{{ \mc F^{\# \Delta}_{#1} }} 
\def\dG{{ G \times G }}
\def\sG{{ G \rtimes G }} 
\def\ksG{{ k[\sG] }}
\def\kdG{{ k[\dG] }}
\def\sw#1#2{{ #1_{(#2)} }}
\def\dbarg{{ \barg \otimes \barg }} 
\def\dbarnm{{ \barnm \otimes \barnm }} 
\def\sbarnm{{ \barnm \# \barnm }} 
\def\sbarg{{ \barg \# \barg }} 
\def\grdbarg{{ \gr\big(  \dbarg  \big) }}
\def\grsbarg{{ \gr\big(  \sbarg  \big) }}
\def\shyg{{ \hyg \# \barg }}
\def\shynm{{ \hynm \# \barnm }}
\def\VV#1#2{{ V(#1) \otimes V(#2) }}
\def\VVf#1#2#3{{ \left[  \VV{#1}{#2}  \right]_{\leq #3} }}
\def\VVs#1#2#3{{ \left[  \VV{#1}{#2}  \right]_{< #3} }}
\def\VVa#1#2{{ V^a(#1, #2) }}
\def\VVA#1#2{{ V^a \big( #1, #2 \big) }}
\def\vv#1#2{{ v_{#1, #2} }}
\def\vvt{{ \vv \tprho \tprho }}
\def\vva#1#2{{ v^a_{#1, #2} }}
\def\vvat{{ \vva \gamma \gamma }}
\def\VVat{{ \VVA \gamma \gamma }}
\def\hh#1#2{{ \ho{#1} \otimes \ho{#2} }}
\def\hhf#1#2#3{{ \left[  \hh{#1}{#2}  \right]_{\geq #3} }}
\def\hha#1#2{{ \hoa{ #1, #2 } }}
\def\indhof#1#2#3{{ H^0\Big(  \flag, \L \big( \hof {#3}{#1} \otimes \chi_{-#2} \big)  \Big) }}
\def\indhofwo#1#2#3{{ H^0\Big(  \flag, \L \big( \hof {#3}{#1} \otimes \chi_{w_0#2} \big)  \Big) }}
\def\deta#1#2{{ \eta_{#1, #2} }}
\def\Fdm#1{{ \mc F^{-,\Delta}_{#1} }} 
\def\chiw#1{{ \chi_{w_0 #1} }}
\def\es{{ \tr{es} }}
\def\v#1{{ v_{#1} }}
\def\va#1{{ v_{#1}^a }}
\def\t{{ \bm t }}
\def\idx{{ \mc I_{\Delta, X} }}
\def\idxp#1{{ \mc I_{\Delta, X}^{#1} }}
\def\r#1{{ \alpha_{#1} }} 
\def\chim#1{{ \chi_{-#1} }} 
\def\dw#1#2{{ \xi_{#1}( #2 ) }} 
\def\dwo#1#2{{ \dw{ \omega_{#1} }{ #2 } }} 
\def\dwop#1#2{{ \dw{ (p-1)\omega_{#1} }{ #2 } }} 
\def\hop#1{{ \Ho{ (p-1)\omega_{#1} } }}
\def\hoo#1{{ \ho{\omega_{#1}} }} 
\def\hoof#1#2{{ \hof{#1}{\omega_{#2}} }} 
\def\hoop#1#2{{ \Hof{#1}{(p-1)\omega_{#2}} }}
\def\vs#1{{ v^*_{#1} }}
\def\vlp{{ v_{-\theta}^{\otimes p-1} }}
\def\Vq#1#2{{ \dfrac{ \V{#1} }{ \Vs{#2}{#1} } }} 
\def\VQ#1#2{{ \dfrac{ \Vbig{#1} }{ \Vsbig{#2}{#1} } }} 
\def\Vqw#1#2#3{{ \left(  \Vq{#1}{#3} \right) \otimes \chi_{#2} }} 
\def\VVq#1#2#3{{ \dfrac{ \VV{#1}{#2} }{ \VVs{#1}{#2}{#3}  } }}
\def\hs{{ h^* }}
\def\po{{ (p-1)\omega_1 }}
\def\pl{{ (p-1)\theta }}
\def\fop{{ F_0' }}
\def\gc{{ \g_\C }}
\def\cgc{{ \g_\C[t] }} 
\def\ucgc{{ U( \cgc ) }} 
\def\ucgcg#1{{ U_{#1}( \cgc ) }}
\def\ucgcf#1{{ U_{\leq #1}( \cgc ) }}
\def\barcgz{{ \bar U_\Z( \g[t] ) }}
\def\barcg{{ \bar U( \g[t] ) }}
\def\bargz{{ \bar U_\Z( \g ) }}
\def\ev{{ ev }}
\def\evv#1{{ ev_{#1} }} 
\def\VC#1{{ V^\C(#1) }}
\def\VCc#1#2{{ V^\C_{#1}(#2) }}
\begin{document}

\author{Chuck Hague}
\title{The induced PBW filtration, Frobenius splitting of double flag varieties, and Wahl's conjecture}
\begin{abstract}
Let $G$ be a semisimple algebraic group over an algebraically closed field of positive characteristic $p$. Generalizing the construction of the PBW filtration on Weyl modules for $G$ we construct a $G$-stable filtration on tensor products of Weyl modules which we call the induced PBW filtration. We use this filtration to give some purely representation-theoretic conditions which are equivalent to the existence of a Frobenius splitting of the double flag variety $G/B \times G/B$ that maximally compatibly splits the diagonal. In particular, this gives a sufficient condition for Wahl's conjecture to hold for $G$ and we use this criterion to prove that Wahl's conjecture holds in type $G_2$ for $p \geq 11$.
\end{abstract}
\maketitle

\setcounter{tocdepth}{2} 
\tableofcontents

\section{Introduction}

\subsection{}

Let $k$ be an algebraically closed field of positive characteristic $p$ and let $G$ be a semisimple algebraic group over $k$. Let $B \subseteq G$ be a Borel subgroup. For a dominant weight $\lambda$ let $\V \lambda$ denote the Weyl module for $G$ with highest weight $\lambda$ and let $\ho \lambda = \V{-w_0\lambda}^*$ denote the induced module for $G$ with highest weight $\lambda$ (where $w_0$ is the longest element of the Weyl group of $G$). In particular, $\ho \lambda$ is the global section module for an equivariant line bundle on the flag variety $\flag$.

In a series of papers (\cite{FeDeg}--\cite{FFLPBWZ} and others by the same authors), Feigin, Finkelberg, Fourier and Littelmann have investigated a particular $B$-stable filtration on $\V \lambda$ called the PBW filtration. We denote this filtration by $\Vf n \lambda$. The PBW filtration gives rise to a dual filtration $ \hof n \lambda $ on the global section module $ \ho \lambda $. It was shown in \cite{HaDeg} that the dual PBW filtration computes the degree of vanishing at the identity $eB \in \flag$: that is, we have
\begin{align}
\hof n \lambda &= \{ s \in \ho \lambda : s \tr{ vanishes to degree $\geq n$ at $eB$} \}. 
\end{align}
Using this, we gave in \cite{HaDeg} a purely representation-theoretic criterion involving the PBW filtration which is equivalent to the existence of a Frobenius splitting of $\flag$ that maximally compatibly splits the identity.

More precisely, let $U$ denote the unipotent radical of $B$ and consider the opposite unipotent radical $\Um \subseteq B^-$. Denote by $\nm$ the Lie algebra of $ \Um $. Let $\barnm$ and $\hynm$ denote the hyperalgebras of $\Um$ and $\nm$ respectively. Then the $\barnm$-module structure on $\V \lambda$ induces a $\hynm$-module structure on the associated graded module $ \Va \lambda $ coming from the PBW filtration, and the representation-theoretic criterion in \cite{HaDeg} is stated in terms of this action.

The goal of this paper is to generalize the above in the following way. Let $\lambda$, $\mu$ be two dominant weights. We construct a $G$-stable filtration $ \VVf \lambda \mu n $ on the tensor product module $ \VV \lambda \mu $ which we call the \textbf{induced PBW filtration}. We then use this filtration to analyze splittings of the double flag variety $\fflag := \flag \times \flag$, in analogy to the way that the PBW filtration can be used to analyze splittings of $\flag$.

In more detail, dualizing the induced PBW filtration gives a $G$-stable filtration $ \hhf \lambda \mu n $ on the tensor product module $\hh \lambda \mu$. Let $\chi_{ w_0 \mu }$ denote the 1-dimensional $B$-module corresponding to the character $w_0 \mu$. By a result due to Kumar in \cite{KuWahl} we obtain (cf. Proposition \ref{pr:induction}) a $G$-equivariant isomorphism
\begin{equation}
\tr{Ind}_B^G \big(  \hof n \lambda \otimes \chi_{ w_0 \mu }  \big) \cong \hhf \lambda \mu n,
\end{equation}
which explains the adjective "induced."

Furthermore, denote by $ \dfflag \subseteq \fflag = G/B \times G/B $ the diagonal. Then $ \hh \lambda \mu $ is the global section module for an equivariant line bundle on $\fflag$, and in analogy to the dual PBW filtration $\hof n \lambda$ we show in Proposition \ref{pr:induction} that the filtration $\hhf \lambda \mu n$ computes the degree of vanishing along $ \dfflag $. That is, we have 
\begin{align}
\hhf \lambda \mu n &= \{ s \in \hh \lambda \mu : s \tr{ vanishes to degree $\geq n$ along $\dfflag$} \}. 
\end{align}
Continuing the analogy to the PBW filtration, in our main theorem (Theorem \ref{th:Wahl's}) we give a series of purely representation-theoretic statements involving the PBW filtration and the induced PBW filtration which are equivalent to the existence of a Frobenius splitting of $\fflag$ that maximally compatibly splits $\dfflag$. We also show that the existence of such a splitting of $\fflag$ implies the existence of a splitting of $\flag$ that maximally compatibly splits $eB$.

Let us say more about the induced PBW filtration and our main theorem. $\VV \lambda \mu$ is a module for the smash product algebra $ \barnm \# \barnm $ obtained through the adjoint action of $\barnm$ on itself, and this module structure induces a $\shynm$-module structure on the associated graded module $\VVa \lambda \mu$. Using this module structure we obtain multiple representation-theoretic criteria for the existence of such a maximal splitting of $\dfflag$ in $\fflag$. We note that two of these criteria (criteria (3) and (4) in Theorem \ref{th:Wahl's}) do not involve the induced PBW filtration or the ring $\shynm$ (they only involve the PBW filtration), but the proof does use the induced PBW filtration. 

The existence of a splitting of $\fflag$ that maximally compatibly splits the diagonal $\dfflag$ is quite desirable because if such a splitting exists then by \cite{LMP} Wahl's conjecture holds for $\flag$ and hence for all partial flag varieties $G/P$ as well (see \S\ref{subsub:Wahl's} below for details on Wahl's conjecture). By \cite{KuWahl}, \cite{LTmax} and \cite{TWahl} Wahl's conjecture is known to hold in characteristic 0 for all semisimple algebraic groups and in positive characteristic for types $A$ and $C$. In \S\ref{sec:G2} below we use the representation-theoretic criterion in Theorem \ref{th:Wahl's} to show that Wahl's conjecture holds in type $G_2$ for $p \geq 11$. Remark that the $G_2$ construction uses criterion (4) of Theorem \ref{th:Wahl's}; in particular, the construction uses only the PBW filtration and not the induced PBW filtration. As noted above, though, the proof of this criterion does use the induced PBW filtration.

We remark that the graded module $\VVa \lambda \mu$ is the fusion product of two associated pullback modules for the hyper current algebra associated to $G$. In \S\ref{sub:fusion} we give a brief outline of the fusion product construction and how it relates to the module $\VVa \lambda \mu$.

I would like to thank Evgeny Feigin for pointing out the connection between the induced PBW filtration and the fusion product. I would also like to thank Ghislain Fourier for helpful comments.

Also, Jesper Funch Thomsen informed me that in unpublished work he has given a direct construction of a splitting of $\fflag$ that maximally compatibly splits $\dfflag$ in type $G_2$.

\subsection{Setup} \label{sub:setup}

As above let $G$ be a semisimple algebraic group over $k$ with Borel $B \subseteq G$. Let $U \subseteq B$ be its unipotent radical. Let $B^- \subseteq G$ be the opposite Borel subgroup to $B$ and let $U^- \subseteq B^-$ be its unipotent radical. Let $T \subseteq B$ be a maximal torus. Set $\g = \tr{Lie}(G)$, $\n = \tr{Lie}(U)$ and $\nm = \tr{Lie}(U^-)$. 

Let $\Lambda$ denote the weight lattice of $G$ and let $\pos \subseteq \Lambda$ denote the positive roots corresponding to our choice of Borel $B$. Set $N := |\pos|$ and denote by $\rho$ the half-sum of the positive roots. Denote by $\dom \subseteq \Lambda$ the set of dominant weights. For $\lambda \in \dom$ set $\lambda^* := -w_0 \lambda \in \dom$, where $w_0$ is the longest element of the Weyl group of $G$. For any $\lambda \in \Lambda$ we have the one-dimensional $B$-module $\chi_\lambda$ with weight $\lambda$. 

For any $B$-module $M$ let $\L(M)$ denote the $G$-equivariant bundle on $\flag$ with fiber $M$. For $\lambda \in \dom$ set $ \L(\lambda) := \L(\chiw \lambda) $. Then we have the induced module
$$ \ho \lambda := \Ho{ \flag, \L(\lambda) } $$
with highest weight $\lambda$. Dually, we have the Weyl module
$$ V(\lambda) := \ho{\lambda^*}^* $$
with highest weight $\lambda$.
For each $\lambda \in \dom$ fix a nonzero highest-weight element $v_\lambda \in \V \lambda$. For $\lambda, \mu \in \dom$ we have the tensor product $ \VV \lambda \mu $. Set
$$ \vv \lambda \mu := v_\lambda \otimes v_\mu \in \VV \lambda \mu . $$

For each $\lambda$ pick nonzero $a_\lambda \in \chi_\lambda$. To ease some potentially cumbersome notation, in the sequel we will frequently use the following slight abuse of notation: For a $B$-module morphism $f : V \to W$ and any $\lambda \in \Lambda$ we will also denote by $f$ the twisted morphism $ V \otimes \chi_\lambda \to W \otimes \chi_\lambda $. In this context, given $v \in V$, when we write $f(v)$ we mean $ (f \otimes \id_{\chi_\lambda})( v \otimes a_\lambda ) \in W \otimes \chi_\lambda $.

\subsection{Hyperalgebraic setup} \label{sub:hyperalgebras}

Denote by $ \barg $, $\barn$, $\barb$, $\barnm$ and $\barbm$ the hyperalgebras of $G$, $U$, $B$, $U^-$, and $B^-$, respectively. Let $\bargf n$ denote the degree filtration on $\barg$. This filtration restricts to a degree filtration $\barnmf n$ on $\barnm$. Let $\hyg$ and $\hynm$ denote the hyperalgebras of $ \g $ and $\nm$, respectively, where we consider these as algebraic $k$-groups isomorphic to some number of copies of $\mb G_a$. For $n \geq 0$ let $\hygg n$, $\hynmg n$ be the degree-$n$ graded parts of these algebras. Note that the adjoint action of $G$ on $\g$ (resp. $B^-$ on $\nm$) gives rise to a $G$-algebra structure on $\hyg$ (resp. a $B^-$-algebra structure on $\hynm$) and hence also a $\barg$-algebra (resp. $\barbm$-algebra) structure.

Let $\Delta$, $\sigma$ denote the comultiplication and coinverse in $\barg$. We will use the Sweedler notation
\begin{align*}
\Delta X &= \sum \sw X1 \otimes \sw X2, \\
\big(  (\Delta \otimes \id) \circ \Delta  \big)(X) &= \sum \sw X1 \otimes \sw X2 \otimes \sw X3,
\end{align*}
etc. Denote by $*$ the adjoint action of $\barg$ on itself. Explicitly, this action is given by
$$  X*Y = \sum \sw X1 Y \sigma(\sw X2).  $$

The adjoint $G$-algebra structure on $\barg$ is filtration-preserving and gives rise to a $G$-algebra structure on $\gr \, \barg$. By \cite{FP}, Theorem 2.1, there is a $G$-equivariant algebra isomorphism
\begin{equation} \label{eq:beta 1}
\beta : \gr \, \barg \isom \hyg
\end{equation}
which restricts to a $B^-$-equivariant isomorphism
\begin{equation} \label{eq:beta 2}
\gr \, \barnm \isom \hynm.
\end{equation}

Fix a Chevalley basis $ \{ E_\beta , F_{\beta}: \beta \in \pos \}  \subseteq \g $. Then $\barn$ (resp. $\barnm$) is generated as a $k$-algebra by the divided-power elements $\E \beta n$ (resp. $\F \beta n$) for $\beta \in \pos$ and $n \geq 0$. Denote by $ \f \beta n \in \hynm $ the image of the basis element $\F \beta n$ under the projection $\barnmf n \twoheadrightarrow \hynmg n$. Then $\hynm$ is a divided-power polynomial ring over $k$ on the generators $ \{ \f \beta n : \beta \in \pos, n > 0 \} $. 

Set
\begin{subequations} 
\begin{equation}
\Fo := \prod_{\beta \in \pos} \F \beta{p-1} \in \barnmf \pN,
\end{equation}
a central element in $\barnm$. (We may take any ordering of $\pos$ here, cf. Lemma 6.6 and Proposition 6.7 in \cite{Hab80}.) Projecting $\Fo$ to $\hynm$ we obtain the element
\begin{equation}
\fo := \prod_{\beta \in \pos} \f \beta{p-1} \in \hynmg \pN.
\end{equation}
\end{subequations}

\section{Algebra filtrations and the smash product}

\subsection{The filtration $\Fd n$ on $\dbarg$}

Let $ \Id \subseteq \kdG $ be the ideal of the diagonal $\Delta_G \subseteq \dG$. Since $\dbarg$ identifies with the hyperalgebra of $G \times G$ we may consider $ \dbarg $ as a subspace of the linear dual of $\kdG$. Consider the following two multiplicative filtrations on $\dbarg$ given for $n \geq 0$ by
\begin{subequations} 
\begin{equation} \label{eq:Fd 1}
\{  \mu \in \dbarg : \mu( \Idp {n+1} ) = 0  \}
\end{equation}
and
\begin{equation} \label{eq:Fd 2}
\Delta \barg \cdot ( \bargf n \otimes 1 ) = ( \bargf n \otimes 1 ) \cdot \Delta \barg.
\end{equation}
\end{subequations}
(In (\ref{eq:Fd 2}) we are using the fact that
$$  \Delta X \cdot ( Y \otimes 1 ) = \big(  (X*Y) \otimes 1  \big) \cdot \Delta X  $$
for all $X, Y \in \barg$.)

\begin{proposition-definition} \label{pr:Fd} The filtrations (\ref{eq:Fd 1}) and (\ref{eq:Fd 2}) on $\dbarg$ are the same. Denote this filtration by $\Fd n$. 
\end{proposition-definition}

\begin{proof}
Let us temporarily denote the filtration (\ref{eq:Fd 1}) by $F_n$. Consider the variety automorphism $\varphi$ of $G \times G$ given by
$$  \varphi(a, b) = (ab, b)  $$
for all $a, b \in G$. Since $\varphi$ preserves the identity we obtain a linear automorphism $\dv$ of $\dbarg$ and we have an induced filtration $ \dvi F_n $ on $\dbarg$. Note that $ \varphi(1 \times G) = \Delta_G $. Since the ideal of $ 1 \times G $ in $\kdG$ is $I \otimes \kG$ we have
\begin{align} \label{eq:Fd 3}
\dvi F_n &= \{  \mu \in \dbarg : \mu(I^{n+1} \otimes \kG) = 0 \} \\
\nonumber &= \bargf n \otimes \barg.
\end{align}
Now, we have
$$  \dv(X \otimes Y) = (X \otimes 1) \cdot \Delta Y $$
for all $X, Y \in \barg$. Applying $\dv$ to (\ref{eq:Fd 3}) it follows that
\begin{align*}
F_n &= \dv\big(  \bargf n \otimes \barg  \big) \\
&= \big(  \bargf n \otimes 1 \big) \cdot \Delta \barg
\end{align*}
as desired.
\end{proof}

\begin{remark}
It is not difficult to show more generally that for any $a, b \geq 0$ with $a + b = n$ we have
\begin{align*}
\Fd n &=  [\bargf a \otimes \bargf b] \cdot \Delta \barg \\
&= \Delta \barg \cdot [\bargf a \otimes \bargf b] .
\end{align*}
\end{remark}

\subsection{The smash product}

\subsubsection{}

The adjoint action $*$ of $\barg$ on itself gives rise (c.f. \cite{MoHopf}, Definition 4.1.3) to the smash product ring $ \sbarg $ whose underlying vector space is $\dbarg$ and with multiplication given as follows:
\begin{equation}
A \# X \cdot B \# Y = \sum A \cdot \big(  \sw X 1 * B  \big) \# \sw X 2 Y.
\end{equation}
Remark that we may identify $\sbarg$ with the hyperalgebra of $\ksG$, where the semidirect product structure is given by the conjugation action of $G$ on itself.

There is (c.f. \cite{MoHopf}, Example 4.1.9 and \S7.3.3) a $k$-algebra isomorphism
\begin{align} \label{eq:psi}
\psi : \sbarg &\isom \dbarg, \\
\nonumber A \# B &\mapsto (A \otimes 1) \cdot \Delta B.
\end{align}
In particular we have
\begin{subequations} 
\begin{equation} \label{eq:s left}
\psi\big(  \barg \# 1  \big) = \barg \otimes 1
\end{equation}
and
\begin{equation} \label{eq:s right}
\psi\big( 1 \# \barg \big) = \Delta \barg.
\end{equation}
\end{subequations}

\subsubsection{$\Fs n$ and the associated graded rings} \label{subsub:filtrations}

Transporting the filtration $\Fd n$ to a filtration on $\sbarg$ via the isomorphism $\psi$ of (\ref{eq:psi}) we obtain a multiplicative filtration $\Fs n$ on $\sbarg$. As a result we obtain the associated graded algebras
$$ \grdbarg \cong \grsbarg . $$
By Proposition \ref{pr:Fd}, (\ref{eq:s left}) and (\ref{eq:s right}) we have
\begin{equation}\label{eq:Fs}
\Fs n = \bargf n \# \barg
\end{equation}
for all $n \geq 0$. It follows by (\ref{eq:beta 1}) that there are algebra isomorphisms
\begin{align} \label{eq:gr 1}
\gr \big( \dbarg \big) &\cong \gr \big( \sbarg \big) \\
\nonumber &\cong \big( \gr \, \barg \big) \# \barg \\
\nonumber & \cong \shyg,
\end{align}
where the smash product structure $\shyg$ is obtained from the adjoint $\barg$-algebra structure on $\hyg$.

Note that we will give another construction of the ring $\shyg$ in \S\ref{sub:fusion} below as a quotient of the so-called hyper current algebra.

\subsubsection{Restriction to $\dbarnm$ and $\sbarnm$} \label{subsub:sbarnm}

The above constructions restrict to the subalgebras $ \dbarnm \subseteq \dbarg $ and $ \sbarnm \subseteq \dbarnm $. First, by (\ref{eq:beta 2}) the isomorphism $\psi$ restricts to an isomorphism
\begin{equation} \label{eq:psi 2}
\sbarnm \isom \dbarnm.
\end{equation}
Restricting $ \Fd n $ to $\dbarnm$ we obtain a filtration $ \Fdm n $ on $ \dbarnm $ and, via (\ref{eq:psi 2}), a pullback filtration on $\sbarnm$. Thus we also obtain algebra isomorphisms
\begin{align} \label{eq:gr 2}
\gr\big(  \dbarnm  \big) &\cong \gr \big( \sbarnm \big) \\
\nonumber &\cong \hynm \# \barnm.
\end{align}

\section{The induced PBW filtration}


\subsection{The PBW filtration} \label{sub:PBW}

Following \cite{FFLPBWZ}, for $\lambda \in \dom$ define an increasing filtration on the cyclic $\barnm$-module $V(\lambda)$ by
\begin{equation}
\Vf n \lambda := \barnmf n . v_\lambda = \bargf n .v_\lambda
\end{equation}
(recall that $v_\lambda \in \V \lambda$ is a nonzero highest weight vector). This is called the \textbf{PBW filtration} on $\V \lambda$. The PBW filtration is $B$-stable and gives rise to an associated graded $B$-module
\begin{equation}
\Va \lambda := \gr \,\V \lambda.
\end{equation}
Dually, replacing $\lambda$ by $\lambda^*$ we obtain a decreasing $B$-stable filtration on $ \ho \lambda $ by
\begin{equation} \label{eq:hof}
\hof n \lambda := \left( \Vq{\lambda^*}n \right)^* \subseteq \Vss \lambda = \ho \lambda
\end{equation}
and we obtain an associated graded $B$-module
\begin{equation}
\hoa \lambda := \gr \, \ho \lambda
\end{equation}
which is dual to $\Va{\lambda^*}$.

By \S2, \S3 in \cite{FFLPBWZ} there is a natural $\hynm$-module structure on $ \Va \lambda $ coming from the $\barnm$-module structure on $\V \lambda$. Further, let $ \va \lambda $ be the image of $\v \lambda$ under the natural identification of $ \Vf 0\lambda $ with the highest-weight subspace of $\Va \lambda$. Then $\Va \lambda$ is a cyclic $\hynm$-module generated by $\va \lambda$. There is a natural $B$-algebra structure on $\hynm = \hy( \g / \b )$ such that the $\hynm$- and $B$-module structures on $\Va \lambda$ are compatible.

\subsection{The induced PBW filtration}

\subsubsection{} \label{subsub:induced}

For $\lambda, \mu \in \dom$ consider the Weyl module $\VV \lambda \mu$ for $\dG$. Since $ \VV \lambda \mu $ is a cyclic $\dbarnm$-module we define an exhaustive filtration on this module by
\begin{equation} \label{eq:VVf}
\VVf \lambda \mu n := \Fdm n .( v_\lambda \otimes v_\mu ) = \Fd n .( v_\lambda \otimes v_\mu ) 
\end{equation}
(recall from \S\ref{subsub:sbarnm} above that $\Fdm n$ is the restriction of the filtration $\Fd n$ to $\dbarnm$). We call this filtration the \textbf{induced PBW filtration}. This filtration is $\Delta \barg$-stable and hence this filtration is stable under the standard diagonal $G$-action on $\VV \lambda \mu$.

Taking the associated graded module we obtain a $G$-module
\begin{equation}
\VVa \lambda \mu := \gr [\VV \lambda \mu].
\end{equation}
Then $\VVa \lambda \mu$ is also a module for the rings
$$ \gr\big(  \dbarg  \big) \cong \gr\big(  \sbarg  \big) \cong \shyg $$
and
$$ \gr\big(  \dbarnm  \big) \cong \gr\big(  \sbarnm  \big) \cong \shynm. $$
By (\ref{eq:s right}) and (\ref{eq:gr 1}), the $G$-action on $\VVa \lambda \mu$ agrees with the action induced by the diagonal subalgebra
$$ \barg \cong 1 \# \barg \subseteq \shyg. $$

Recall that $\vv \lambda \mu = v_\lambda \otimes v_\mu$. Note that
\begin{equation}
\VVf \lambda \mu 0 = \Delta \barg.( \vv \lambda \mu ) = \V{\lambda + \mu} \subseteq \VV \lambda \mu
\end{equation}
so that $ \V {\lambda + \mu} $ is naturally a $G$-submodule of $\VVa \lambda \mu$. Furthermore, $\VVa \lambda \mu$ is a cyclic $ \shynm $-module generated by the image of the highest-weight vector $v_{\lambda + \mu}$ under the inclusion $\V{\lambda + \mu} \subseteq \VVa \lambda \mu$.

As we will see in \S\ref{sub:fusion} below, we can also identify $\VVa \lambda \mu$ with a 2-fold fusion product module for the hyper current algebra associated to $G$.

\begin{remark} \label{rem:filtrations}
Identify the $B$-module $\V \lambda \otimes \chi_\mu$ with the $B \times B$-subspace $\V \lambda \otimes v_\mu$ of $\VV \lambda \mu$. It follows from the equality $ \Fd n = \Delta \barg \cdot ( \bargf n \otimes 1 ) $ that for all $\lambda, \mu \in \dom$ the $B$-equivariant inclusion $\V \lambda \otimes \chi_\mu \hookrightarrow \VV \lambda \mu$ is filtration-preserving, where we take the PBW filtration on $\V \lambda$ and the induced PBW filtration on $\VV \lambda \mu$. Taking the associated graded modules we obtain a $B$-equivariant morphism
\begin{equation} \label{eq:V's}
\Va \lambda \otimes \chi_\mu \to \VVa \lambda \mu.
\end{equation}
In general this map will not be injective; consider for example the case $\mu=0$. It is interesting to ask under what conditions on $\lambda$ and $\mu$ this map will be injective.

Remark that the morphism (\ref{eq:V's}) is also $\hynm$-equivariant, where we take the $\hynm$-action on $\VVa \lambda \mu$ induced by the inclusion $\hynm \cong \hynm \# 1 \subseteq \shynm$.
\end{remark}

\subsubsection{The dual induced PBW filtration $\hhf \lambda \mu n$}

Dualizing the induced PBW filtration, we get a decreasing $G$-stable filtration 
\begin{equation} \label{eq:hhf}
\hhf \lambda \mu n := \left(  \VVq{\lambda^*}{\mu^*}n  \right)^*
\end{equation}
on $\hh \lambda \mu$ which induces an associated graded $G$- and $ \shyg $-module
\begin{equation}
\hha \lambda \mu := \gr \, \big(  \hh \lambda \mu  \big)
\end{equation}
which is dual to $ \VVa{\lambda^*}{\mu^*} $.

\begin{remark} \label{rem:tangent bundle}
There is a $B$-equivariant Hopf duality pairing $ \symn \otimes \hynm \to k $ obtained from the identification $\symn = k[\nm]$. For $\lambda \in \dom$ we get a $B$-equivariant degree-preserving inclusion $ \hoa \lambda \hookrightarrow \symn \otimes \chiw \lambda$ by dualizing the natural $B$-module surjection
\begin{align*}
\hynm \otimes \chi_{\lambda^*} &\twoheadrightarrow \Va {\lambda^*}, \\
f &\mapsto f.\va \lambda.
\end{align*}
Using this, one can show that there is a $G$-equivariant degree-preserving inclusion
\begin{equation}
\hha \lambda \mu \hookrightarrow H^0\Big(  \flag, \L \big( \symn \otimes \chiw{(\lambda + \mu)} \big)  \Big)
\end{equation}
for all $\lambda, \mu \in \dom$. Further, the right-hand module in the above equation identifies with the space of global sections of a $G$-equivariant line bundle on the tangent bundle $\mc T$ of $\flag$. Thus we may consider the modules $\hha \lambda \mu$ as sections of line bundles on $\mc T$.
\end{remark}

\subsubsection{Connection to geometry}

Recall that $\fflag = G/B \times G/B$. Denote by $\dfflag \subseteq \fflag$ the diagonal and let $ \Ids \subseteq \mc O_\fflag $ be the ideal sheaf of $\dfflag$. Recall that we set $ \L(\lambda) := \L(\chiw \lambda) $. The following proposition gives a connection between the PBW filtration and the induced PBW filtration.
\begin{proposition} \label{pr:induction}
For all $\lambda, \mu \in \dom$ and $n \geq 0$ there are $G$-module isomorphisms
\begin{align}
\indhofwo \lambda \mu n & \cong H^0\big(  \fflag, \Idsp n \otimes \L(\lambda) \boxtimes \L(\mu)  \big) \\
\nonumber & \cong \hhf \lambda \mu n .
\end{align}
\end{proposition}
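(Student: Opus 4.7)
The plan is to prove the two displayed isomorphisms separately: the first by a direct-image computation in the style of Kumar's realization of $\fflag$ as a $G/B$-bundle, and the second by a duality argument identifying both sides with the annihilator of $\Fdm{n-1}.\vv{\lambda^*}{\mu^*}$ inside $\hh\lambda\mu$.

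For the first isomorphism, I would follow Kumar's approach from \cite{KuWahl}. Take the second projection $p_2 : \fflag \to G/B$, $(yB, zB) \mapsto zB$. Each fiber $p_2^{-1}(zB)$ is a copy of $G/B$ meeting $\dfflag$ in the single point $(zB, zB)$, and the restriction of $\L(\lambda) \boxtimes \L(\mu)$ to this fiber is $\L(\lambda)$ tensored with the one-dimensional fiber of $\L(\mu)$ at $zB$. Applying equation (1.1.1) fiberwise (which realizes $\hof n \lambda$ as the sections of $\L(\lambda)$ vanishing to order $\geq n$ at $eB$) together with the projection formula, one obtains
\[
(p_2)_*\big(\Idsp n \otimes \L(\lambda) \boxtimes \L(\mu)\big) \;\cong\; \L\big(\hof n \lambda \otimes \chi_{w_0\mu}\big)
\]
as $G$-equivariant sheaves on $G/B$. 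Taking $H^0$ then yields the first isomorphism.

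For the second isomorphism, the approach is via duality. Identifying $\hh \lambda \mu = \VV{\lambda^*}{\mu^*}^*$ through the natural pairing, equations (\ref{eq:hhf}) and (\ref{eq:VVf}) together show that $\hhf \lambda \mu n$ is the annihilator of $\Fdm{n-1}.\vv{\lambda^*}{\mu^*}$. It therefore suffices to prove that the sections vanishing to order $\geq n$ along $\dfflag$ also form this annihilator. To do so, I would restrict each section $s$ to the open big cell $\Um \times \Um \subseteq \fflag$ (via $(u_1, u_2) \mapsto (u_1 B, u_2 B)$), trivializing $\L(\lambda) \boxtimes \L(\mu)$ in the process. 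The pairing of $s$ with $X.\vv{\lambda^*}{\mu^*}$ for $X \in \dbarnm$ then transports to the evaluation of the distribution $X \in \hy(\Um \times \Um) = \dbarnm$ on the germ of the restricted $s$ at $(e,e)$. On the big cell $\dfflag$ cuts out the small diagonal $\Delta_{\Um}$ whose ideal $\Idm$ is the restriction of $\Id$ to $k[\Um \times \Um]$, and by equation (\ref{eq:Fd 1}) the subspace $\Fdm{n-1} = \Fd{n-1} \cap \dbarnm$ is exactly the annihilator of $\Idmp n$ in $\dbarnm$. Hence $s$ annihilates $\Fdm{n-1}.\vv{\lambda^*}{\mu^*}$ if and only if its germ at $(e,e)$ lies in $\Idmp n$, which, using the $G$-equivariance of the entire setup and the fact that $\dfflag$ is the single $G$-orbit of $(eB, eB)$, is equivalent to $s \in H^0(\fflag, \Idsp n \otimes \L(\lambda) \boxtimes \L(\mu))$.

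The main obstacle will be the local duality in the second step: one must verify that the algebraic pairing $(s, X) \mapsto \langle s, X.\vv{\lambda^*}{\mu^*} \rangle$ matches the evaluation of $X$, viewed as a distribution at $(e,e) \in \Um \times \Um$, on the germ of the trivialized section $s$. Concretely this reduces to showing that the $\dbarnm$-module structure on $\VV{\lambda^*}{\mu^*}$ is dual, under the trivialization of $\L(\lambda) \boxtimes \L(\mu)$ over the big cell, to the action of $\hy(\Um \times \Um)$ on local sections as formal differential operators. Once this compatibility is established (which follows from the $B \times B$-equivariance of the trivialization together with the definition of the hyperalgebra), the rest of the argument is formal.
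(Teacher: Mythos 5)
Your proposal is correct and follows essentially the same route as the paper: the first isomorphism is exactly Kumar's direct-image argument (which the paper simply cites from the proof of Lemma 2.3 in \cite{KuWahl}, combined with the vanishing-order interpretation of the dual PBW filtration), and your second step --- restricting to the big cell $\Um \times \Um$, using the Hopf duality under which $\Fdm{n-1}$ is the annihilator of $\Idmp{n}$, and identifying $\hhf \lambda \mu n$ with the annihilator of $\Fdm{n-1}.(v_{\lambda^*} \otimes v_{\mu^*})$ --- is precisely the paper's argument, including the matrix-coefficient realization of the restriction map that you flag as the main compatibility to check.
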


\begin{proof}
The first isomorphism is contained in the proof of \cite{KuWahl}, Lemma 2.3 (which is valid in arbitrary characteristic). We now consider the second isomorphism.

For $\lambda, \mu \in \Lambda$ denote by $ \cchi \lambda \mu $ the 1-dimensional $ B \times B $-module associated to the $T \times T$-character $(\lambda, \mu)$. There is a nondegenerate Hopf duality pairing
\begin{equation}
\eta : \barnm \otimes \barnm \otimes \kum \otimes \kum \to k
\end{equation}
which defines the hyperalgebra $\barnm \otimes \barnm$ of $\Um \times \Um$. We have a $\dbarg$-module structure on $ \dbarnm $ obtained from the identification with the zero Verma module
$$  \big( \dbarg \big) \otimes_{\barb \otimes \barb} \chi_{0,0}  $$
for $\dbarg$. Thus, via the pairing $\eta$, we obtain a $\dbarg$-module structure on $\kkum$ and $\eta$ is a $\dbarg$-equivariant pairing. (Remark that $\kkum$ is the dual zero Verma module for the opposite Borel $B^- \times B^-$ of $G \times G$.)

Let
\begin{equation}
\deta \lambda \mu : \hh \lambda \mu \otimes \Vstar \lambda \otimes \Vstar \mu \to k
\end{equation}
be the $\dG$-equivariant duality pairing. Considering $ \kum \otimes \kum $ as a subspace of the linear dual of $ \barnm \otimes \barnm $ under the pairing $\eta$, we have the $\dbarg$-equivariant section restriction inclusion
\begin{align} \label{eq:induction 1}
\hh \lambda \mu &\hookrightarrow \kum \otimes \kum \otimes \cchi{w_0\lambda}{w_0\mu}, \\
\nonumber a \otimes b &\mapsto \big [ X \otimes Y \mapsto \deta \lambda \mu\big(  a \otimes b \otimes X.v_{\lambda^*} \otimes Y.v_{\mu^*}  \big) \big]
\end{align}
for all $X, Y \in \barnm$.
We will consider $\hh \lambda \mu$ as a $\dbarg$-submodule of $\kum \otimes \kum \otimes \cchi{w_0\lambda}{w_0\mu}$ via this inclusion.

Let $\Idm \subseteq \kkum$ denote the ideal of the diagonal. Identifying $\Um \times \Um$ with the big cell in $\fflag$, we have that the restriction of $H^0\big(  \fflag, \Idsp n \otimes \L(\lambda) \boxtimes \L(\mu)  \big)$ to $ \Um \times \Um $ is $\big( \Idmp n  \otimes \cchi{w_0\lambda}{w_0\mu} \big) \cap [\hh \lambda \mu]$. Since this restriction is $\Delta \barg$-equivariant it follows that $ \Idm  \otimes \cchi{w_0\lambda}{w_0\mu} $ is a $\Delta \barg$-submodule of $ \kkum \otimes \cchi{w_0\lambda}{w_0\mu} $. Thus we have reduced to showing that there is an equality
\begin{equation} \label{eq:induction 2}
\big( \Idmp n  \otimes \cchi{w_0\lambda}{w_0\mu} \big) \cap [\hh \lambda \mu] = \hhf \lambda \mu n
\end{equation}
of $\Delta \barg$-submodules (and hence of $G$-modules) of $ \kkum \otimes \cchi{w_0\lambda}{w_0\mu} $.

Now, via the pairing $\eta$ we have
$$  \big(  \Fdm{n-1}  \big)^\perp = \Idmp n.  $$
Thus by (\ref{eq:VVf}) and (\ref{eq:induction 1}) we have
\begin{align*}
\Idmp n \cap [\hh \lambda \mu] &= \left \{ s \in \hh \lambda \mu : \deta \lambda \mu\big(  s \otimes ( \Fdm{n-1} ).(\vs \lambda \otimes \vs \mu)  \big) = 0 \right \} \\
&= \hhf \lambda \mu n
\end{align*}
as desired.
\end{proof}

\subsection{Wahl's conjecture} \label{sub:Wahl's}

\subsubsection{Background} \label{subsub:Wahl's}

We first recall some background on Wahl's conjecture. For any nonsingular $k$-variety $X$ let $\Delta_X \subseteq X \times X$ denote the diagonal and let $ \idx \subseteq \str{X \times X} $ be the ideal sheaf of $\Delta_X$. Then by definition we have the cotangent bundle
\begin{equation}
\Omega_X = \idx / \idxp 2
\end{equation}
of $X$, where we identify $X$ with $\Delta_X$. For two line bundles $\L_1$ and $\L_2$ on $X$, the projection $ \idx \twoheadrightarrow \Omega_X $ induces a sheaf map
\begin{equation} \label{eq:gaussian}
\Ho{ X \times X, \idx \otimes \L_1 \boxtimes \L_2 } \to \Ho{X, \Omega_X \otimes \L_1 \otimes \L_2}.
\end{equation}

Let $P \supseteq B$ be any parabolic subgroup. In \cite{Wahl}, Wahl conjectured that the map (\ref{eq:gaussian}) is surjective in the special case $X = G/P$ for $\L_1$ and $\L_2$ ample. A full proof in characteristic 0 was given by Kumar in \cite{KuWahl}. In positive characteristic Wahl's conjecture is known to hold in types $A_n$ and $C_n$ by work of Lauritzen and Thomsen (\cite{LTmax} and \cite{TWahl}) via the following criterion due to Lakshmibai, Mehta and Parameswaran \cite{LMP}. (We refer the reader to \S4 of \cite{HaDeg} for details on Frobenius splittings and maximal compatible splittings.)

Recall that $N$ is the number of positive roots of $G$ (equivalently, $N = \dim  \flag$) and that $\fflag = \flag \times \flag$. If $\fflag$ has a Frobenius splitting that splits $\dfflag$ with maximal multiplicity $\pN$ then Wahl's conjecture holds for $\flag$ and hence also for any partial flag variety $G/P$, c.f. Proposition 2.4 in \cite{LTmax}. This is the criterion we will use below.

Remark that the existence of such a splitting of $\fflag$ implies that the tangent bundle of $\flag$ is Frobenius split, c.f. Remark 2.13 in \cite{LTmax}.

\subsubsection{The main theorem}

Set $\gamma := \tprho$. Recall that $\vv \gamma \gamma$ denotes a nonzero highest weight vector in $\VV \gamma \gamma$. Let $\vvat \in \VVa \gamma \gamma$ be the corresponding highest-weight vector. Also recall the elements $\Fo \in \barnmf \pN$ and $\fo \in \hynmg \pN$ from \S\ref{sub:hyperalgebras} above and recall the $\shynm$-module structure on $ \VVa \gamma \gamma $.

Define a partial order $ \preceq $ on $\dom$ as follows: we have $ \nu \preceq \mu $ if and only if $ \mu - \nu \in \dom $. The following is our main theorem.

\begin{theorem} \label{th:Wahl's}
The following are equivalent:
\begin{enumerate}
\item There is a splitting of $\fflag$ that maximally compatibly splits $\dfflag$.
\item $(\fo \# \Fo).\vvat \neq 0$ in $\VVat$.
\item There is a $G$-module $V$ and a $B$-module morphism
$$f : \Vqw \gamma \gamma \pN \to V$$
such that $ \Fo.f(v_0) \neq 0 $, where $v_0 \in \Vq \gamma \pN$ is the image of $\Fo.v_\gamma \in \V \gamma$ under the quotient map.
\item There is a $G$-module $W$, a dominant weight $\nu \preceq \gamma$ and a $B$-module morphism
$$h : \Vqw \nu \gamma \pN \to W$$
such that $ \Fo. \big(  v_{ \gamma - \nu } \otimes h(u_0)   \big) $ is nonzero in $ \V{\gamma - \nu} \otimes W $, where $u_0 \in \Vq \nu \pN$ is the image of $\Fo.v_\nu \in \V \nu$ under the quotient map.
\end{enumerate}
In particular, if any of these equivalent conditions hold for $G$, then (a) Wahl's conjecture holds for $G/P$, where $P \subseteq G$ is any parabolic; (b) the tangent bundle of $G/B$ is Frobenius split; and (c) there is a Frobenius splitting of $\flag$ that maximally compatibly splits the identity $eB$.
\end{theorem}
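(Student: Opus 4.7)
The plan is to first establish $(1) \Leftrightarrow (2)$ via a geometric-representation-theoretic duality, then close the loop $(2) \Rightarrow (3) \Rightarrow (4) \Rightarrow (2)$, and finally deduce the three consequences. For $(1) \Leftrightarrow (2)$, I translate the splitting condition to representation theory: a Frobenius splitting of $\fflag$ lies in $H^0(\fflag, \omega_\fflag^{1-p}) \cong \hh \gamma \gamma$, and by Proposition~\ref{pr:induction} its vanishing to order $\geq \pN$ along $\dfflag$ corresponds to membership in $\hhf \gamma \gamma \pN$. Maximal-compatible splitting amounts to the combined requirement that the section (i) has nonzero image in $\hhf \gamma \gamma \pN / \hhf \gamma \gamma {\pN+1}$ and (ii) genuinely provides a Frobenius splitting. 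Using the duality (\ref{eq:hhf}) between graded pieces of $\hh \gamma \gamma$ and of $\VV \gamma \gamma$ (recalling $\gamma^* = \gamma$), both requirements combine into a single nondegeneracy condition against a distinguished vector in the degree-$\pN$ piece of $\VVa \gamma \gamma$. I identify this vector as $(\fo \# \Fo).\vvat$: under the isomorphism (\ref{eq:gr 1}) the factor $1 \# \Fo$ corresponds to the diagonal $\Fo$ and encodes the Mehta-Ramanathan splitting condition, while $\fo \# 1$ extracts the degree-$\pN$ transverse component along $\dfflag$. Centrality of $\Fo$ in $\barnm$ ensures the two factors combine cleanly in the smash-product order.

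For the chain $(2) \Rightarrow (3) \Rightarrow (4) \Rightarrow (2)$: the step $(3) \Rightarrow (4)$ is immediate by taking $\nu = \gamma$ and $W = V$. For $(2) \Rightarrow (3)$, set $V := \VVa \gamma \gamma$ viewed as a $G$-module via the diagonal subalgebra $1 \# \barg \subseteq \shyg$, and define $f$ on $\Vqw \gamma \gamma \pN$ as the composition arising from Remark~\ref{rem:filtrations}: the inclusion $\V \gamma \otimes \chi_\gamma \hookrightarrow \VV \gamma \gamma$ followed by passage to the associated graded, both filtered by the (induced) PBW filtration. Under this $f$, the element $v_0$ maps to the class of $\Fo.v_\gamma \otimes v_\gamma$ in degree $\pN$, and by the smash-product multiplication formula together with centrality of $\Fo$, one verifies $\Fo.f(v_0) = (\fo \# \Fo).\vvat$, so nonvanishing transfers. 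For $(4) \Rightarrow (2)$, universality of the Weyl module yields a $G$-equivariant map $\phi : \V \gamma \to \V{\gamma-\nu} \otimes \V \nu$ sending $v_\gamma \mapsto v_{\gamma-\nu} \otimes v_\nu$; this map is compatible with PBW on the source and induced PBW on the target. Composing with $\id \otimes h$ produces a $B$-map $\Vqw \gamma \gamma \pN \to \V{\gamma-\nu} \otimes W$ that is exactly the data required by $(3)$, with $V := \V{\gamma-\nu} \otimes W$; the nonvanishing of $\Fo.(v_{\gamma-\nu} \otimes h(u_0))$ supplied by $(4)$ matches $\Fo.f(v_0)$ here, so $(3)$ (and hence $(2)$) follows.

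For the ``in particular'' consequences: (a) is the Lakshmibai-Mehta-Parameswaran criterion~\cite{LMP} recalled in \S\ref{subsub:Wahl's}; (b) is Remark 2.13 in~\cite{LTmax}; and (c) follows from $(1)$ either by restricting the splitting of $\fflag$ along the closed embedding $\flag \hookrightarrow \fflag$ sending $x \mapsto (x, eB)$ and tracking how multiplicity along $\dfflag$ at $(eB, eB)$ transfers to multiplicity at $eB$, or equivalently by combining the equivalences above with the representation-theoretic criterion for splittings of $\flag$ established in \cite{HaDeg}.

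The main obstacle is the equivalence $(1) \Leftrightarrow (2)$. The technical challenge is that a maximally compatible Frobenius splitting of $\dfflag$ in $\fflag$ is morally a \emph{product} of two conditions (being a Frobenius splitting and achieving maximal vanishing along the diagonal), yet condition $(2)$ packages both into the single algebraic statement $(\fo \# \Fo).\vvat \neq 0$. The decomposition (\ref{eq:gr 1}) makes this possible: the smash-product structure disentangles the diagonal $\barg$-action (which carries the splitting criterion through $\Fo$) from the $\hynm$-action (which carries maximal transverse vanishing through $\fo$), and the associated graded ring $\shyg$ precisely records their interaction. Once the equivalence $(1) \Leftrightarrow (2)$ is in place, the remaining implications are comparatively routine applications of universality and filtration compatibility.
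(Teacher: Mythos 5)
Your cycle of implications does not close, and the missing link is exactly the step where the induced PBW filtration does its real work. You prove $(2) \Rightarrow (3)$ by exhibiting one particular witness $f$ (essentially the map $\bar a$ induced on quotients by $\V \gamma \otimes \chi_\gamma \hookrightarrow \VV \gamma \gamma$; note that for this to be well defined the target should be the quotient $\VVq \gamma \gamma \pN$, not the associated graded $\VVa \gamma \gamma$), and your ``$(4) \Rightarrow (2)$'' paragraph in fact only produces another particular witness for $(3)$: it ends with ``so $(3)$ (and hence $(2)$) follows,'' but $(3) \Rightarrow (2)$ is never proved anywhere in your proposal. Condition $(3)$ quantifies over an \emph{arbitrary} pair $(V, f)$, so one cannot conclude $(2)$ from the computation of your chosen $f$. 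The paper closes this gap by a universality argument: by Proposition \ref{pr:induction}, $\VVq \gamma \gamma \pN$ is (dual to) the induced module $\indhofwo \gamma \gamma \pN$, $\bar a$ is dual to the Frobenius-reciprocity evaluation map, and hence any $B$-morphism $f$ as in $(3)$ extends to a $G$-morphism $\widehat f$ with $\widehat f \circ \bar a = f$; then $\widehat f\big( \Fo.\bar a(v_0) \big) = \Fo.f(v_0) \neq 0$ forces $\Fo.\bar a(v_0) \neq 0$, which is $(2)$. Without this step you have only $(2) \Rightarrow (3) \Leftrightarrow (4)$, so in particular $(4)$ is not shown to imply $(1)$ or $(2)$ --- and that is precisely the implication the $G_2$ application relies on.

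There are also gaps in the two steps you describe only heuristically. In $(1) \Leftrightarrow (2)$, identifying the relevant class with $(\fo \# \Fo).\vvat$ is not a matter of ``centrality disentangling the factors'': what is needed is the norm-form identity $(\Fo \otimes 1)\cdot \Delta \Fo = \Fo \otimes \Fo$ in $\dbarnm$, which yields $(\Fo \# \Fo).(\vv \gamma \gamma) = \Fo.v_\gamma \otimes \Fo.v_\gamma$; combined with the fact that the splitting-detecting functional on $\hh \gamma \gamma \cong H^0(\fflag, \omega_\fflag^{1-p})$ is $\sigma \mapsto \deta \gamma \gamma\big( \sigma \otimes \Fo.v_\gamma \otimes \Fo.v_\gamma \big)$ and the duality $\hhf \gamma \gamma \pN = \big( \VVq \gamma \gamma \pN \big)^*$, this is what makes $(2)$ equivalent to the existence of a maximal compatible splitting. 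Centrality of $\Fo$ only gives that $\Fo \# 1$ and $1 \# \Fo$ commute; it does not kill the mixed terms of $\Delta \Fo$. Similarly, in $(4) \Rightarrow (3)$ your ``matches'' conceals the one nontrivial point, namely that the induced map sends $v_0$ to $v_{\gamma - \nu} \otimes u_0$: this requires the coexpansion argument that every term $\Foo.v_{\gamma-\nu} \otimes \Fot.v_\nu$ with $\Fot \neq \Fo$ has second-factor degree $< \pN$ and dies in the quotient. Finally, for consequence (c), restricting the splitting of $\fflag$ along $x \mapsto (x, eB)$ is not obviously legitimate (that copy of $\flag$ need not be compatibly split); the clean route, as in the paper, is to note that $(3)$ forces $v_0 \neq 0$ and invoke the criterion of \cite{HaDeg}, which is roughly your second suggested alternative.
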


\begin{proof}

First note that consequences (a) and (b) follow from the discussion above. For (c), retain the notation of condition (3) of the theorem. By Proposition 4.5 in \cite{HaDeg} there is a splitting of $\flag$ that maximally compatibly splits $eB$ if and only if $v_0 \neq 0$. Thus, if condition (3) holds, then $v_0 \neq 0$ and there is such a splitting of $\flag$.

We now prove the equivalence of conditions (1) -- (4). We will split the equivalences (1) $\iff$ (2) and (2) $\iff$ (3) into smaller equivalences. (3) $\iff$ (4) is proved directly.

\medskip

\hspace{-.2in}$\bm{(1) \iff (2):}$
\begin{proofenum}
\item \textbf{$(\fo \# \Fo).\vvat \neq 0$ in $\VVat$ if and only if the image of $\Fo.v_\gamma \otimes \Fo.v_\gamma$ under the projection
$$ q : \VVf \gamma \gamma \pN \twoheadrightarrow \VVa \gamma \gamma_\pN  $$
is nonzero.}

Note that in $\dbarnm$ we have
\begin{equation} \label{eq:Fo}
(\Fo \otimes 1) \cdot \Delta \Fo = \Fo \otimes \Fo
\end{equation}
as $\Fo$ is a norm form for the "small" hyperalgebra of $\barnm$ generated by $\{F_\beta\}_{\beta \in \pos}$. Hence
\begin{equation*}
(\Fo \# \Fo).(\vvt) = \Fo.v_\gamma \otimes \Fo.v_\gamma \in \VVf \gamma \gamma \pN
\end{equation*}
and the statement follows since
$$  q\big(  (\Fo \# \Fo).(\vvt)  \big) = (\fo \# \Fo).\vva \gamma \gamma.  $$

\item \textbf{The image of $\Fo.v_\gamma \otimes \Fo.v_\gamma$ under the projection $q$ is nonzero if and only if there is $\sigma \in \hhf \gamma \gamma \pN$ such that
\begin{equation} \label{eq:gamma pairing}
\deta \gamma \gamma \big(  \sigma \otimes \Fo.v_\gamma \otimes \Fo.v_\gamma  \big) \neq 0.
\end{equation}}
(Recall that 
$$ \deta \gamma \gamma : \hh \gamma \gamma \otimes \VV \gamma \gamma \to k  $$
is the $G \times G$-equivariant pairing; here we use the fact that $\gamma^* = \gamma$.)

This is clear, since
$$  \hhf \gamma \gamma \pN = \left(  \VVq \gamma \gamma \pN  \right)^*  $$

\item \textbf{There is $\sigma \in \hhf \gamma \gamma \pN$ satisfying (\ref{eq:gamma pairing}) if and only if there is a splitting of $\fflag$ that maximally compatibly splits $\dfflag$.}

We have
\begin{equation}
\omega_\fflag^{1-p} \cong \hh \gamma \gamma,
\end{equation}
and this space identifies with the space of Frobenius-linear endomorphisms of the structure sheaf $\mc O_\fflag$ of $\fflag$ (c.f. \S1.3 and \S2.1 in \cite{BK}). Now, the map
$$  \theta : \hh \gamma \gamma \to k $$
that detects Frobenius splittings of $\fflag$ is given by
$$  \theta(\sigma) = \deta \gamma \gamma( \sigma \otimes \Fo.v_\gamma \otimes \Fo.v_\gamma ).  $$ Also, by Proposition \ref{pr:induction} we have that $  \hhf \gamma \gamma \pN $ is the space of global sections of $\omega_\fflag^{1-p}$ that vanish to degree $\geq \pN$ on $\dfflag$. Thus there is a splitting of $\fflag$ that maximally compatibly splits the diagonal if and only if there is $\sigma \in \hhf \gamma \gamma \pN$ such that $\theta(\sigma) \neq 0$, i.e. if and only if (\ref{eq:gamma pairing}) holds.
\end{proofenum}

\medskip

\hspace{-.2in}$\bm{(2) \iff (3):}$

Consider the natural $B \times B$-equivariant inclusion
\begin{align*}
a : \V \gamma \otimes \chi_\gamma &\hookrightarrow \VV \gamma \gamma.
\end{align*}
As noted in Remark \ref{rem:filtrations}, this map is filtration-preserving (where we take the PBW filtration on $\V \gamma$); i.e., we have
\begin{equation*}
a\big(  \Vf n \gamma \otimes \chi_\gamma \big) \subseteq \VVf \gamma \gamma n
\end{equation*}
for all $n \geq 0$. Hence $a$ fits into a commutative diagram
\begin{equation} \label{eq:Wahl's 0}
\xymatrix{
\V \gamma \otimes \chi_\gamma \ar@{^(->}[r]^a \ar@{>>}[d] & \VV \gamma \gamma \ar@{>>}[d]^c \\
\Vqw \gamma \gamma \pN \ar[r]_{\bar a} & \VVq \gamma \gamma \pN
}
\end{equation}
of $B$-modules, where we have denoted by $\bar a$ the induced map on the bottom and by $c$ the quotient map on the right. Note that $ \VVq \gamma \gamma \pN $ is a $G$-module through the diagonal action on $\VV \gamma \gamma$. That is, for $X \in \barg$ and $v \in \VV \gamma \gamma$ we have
\begin{equation} \label{eq:c}
c \big( \Delta X.v) = X.c(v).
\end{equation}

\begin{proofenum} 

\item \textbf{$(\fo \# \Fo).\vvat \neq 0$ in $\VVat$ if and only if $\Fo.\bar a( v_0 ) \neq 0$.}

Considering the $\sbarg$-action on $\VV \gamma \gamma$, we have
$$  (\Fo \# 1).(\vv \gamma \gamma) = a( \Fo.v_\gamma ). $$
Since $\Fo \# 1$ and $1 \# \Fo$ commute in $\sbarg$ it follows that
\begin{equation} \label{eq:Wahl's 1}
(\Fo \# \Fo).(\vv \gamma \gamma) = \Delta \Fo.\big(a( \Fo.v_\gamma ) \big).
\end{equation}
Now, $(\fo \# \Fo).\vvat \neq 0$ in $\VVat$ if and only if the image of $(\Fo \# \Fo).(\vv \gamma \gamma)$ in $\VVq \gamma \gamma \pN$ is nonzero. But by (\ref{eq:c}) and (\ref{eq:Wahl's 1}) this image is $\Fo.\bar a( v_0 )$.

\item \textbf{$\Fo.\bar a( v_0 ) \neq 0$ if and only if statement (3) holds.}

The "only if" part is obvious by taking $f = \bar a$. Conversely, assume given a $G$-module $V$ and morphism $f$ as in statement (3). Denote by
\begin{equation} \label{eq:ev}
ev : \indhof \gamma \gamma \pN \to \hof \pN \gamma \otimes \chi_{-\gamma}
\end{equation}
the standard $B$-module evaluation morphism given by Frobenius reciprocity; then $\bar a$ is the dual map to $ev$. Thus, by Frobenius reciprocity, $f$ extends to a $G$-module morphism $\widehat f$ as in the following commutative diagram:
\begin{equation}
\xymatrix{
\VVq \gamma \gamma \pN \ar[r]^{\hspace{.7in} \widehat f} & V \\
\Vqw \gamma \gamma \pN \ar[u]^{\bar a} \ar[ur]_{ f }
}
\end{equation}
By the assumption on $f$ we have
$$ \widehat f \big(  \Fo.\bar a(v_0)  \big) = \Fo.\big[ \widehat f\big(  \bar a(v_0)  \big) \big] = \Fo.f(v_0) \neq 0  $$
which implies $\Fo.\bar a(v_0) \neq 0$ as desired.

\end{proofenum}

\medskip

\hspace{-.2in} $\bm{(3) \iff (4):}$

The direction (3) $\implies$ (4) is trivial (take $\nu = \gamma$). Conversely, assume given $\nu$, $h$ and $W$ as in the statement. Dualizing the multiplication map
$$ \ho{ \gamma - \nu^* } \otimes \hof \pN {\nu^*} \to \hof \pN \gamma  $$
we obtain a $B$-module map
\begin{equation} \label{eq:Wahl's 2}
d : \Vq \gamma \pN \longrightarrow \V{\gamma - \nu} \otimes \VQ \nu \pN .
\end{equation}
Tensoring (\ref{eq:Wahl's 2}) by $\chi_\gamma$ we obtain a composite map
\begin{equation}
f : \Vqw \gamma \gamma \pN \stackrel d \longrightarrow \V{\gamma - \nu} \otimes \Vqw \nu \gamma \pN \stackrel {\id \otimes h} \longrightarrow \V{\gamma - \nu} \otimes W
\end{equation}
(recall our conventions on twisting by 1-dimensional $B$-modules from \S\ref{sub:setup} above). We now verify that $f$ satisfies the conditions in (3).

As in (3), denote by $v_0 \in \Vq \gamma \pN$ the image of $ \Fo.v_\gamma $ under the quotient map $ \V \gamma \twoheadrightarrow \Vq \gamma \pN $. I claim that
\begin{equation} \label{eq:Wahl's 3}
d(v_0) = v_{\gamma - \nu} \otimes u_0.
\end{equation}
Now, $d$ fits into a commutative diagram
\begin{equation*}
\xymatrix{
\V \gamma \ar@{^(->}[r] \ar@{>>}[d] & \V{\gamma - \nu} \otimes \V \nu \ar@{>>}[d]^{pr} \\
\displaystyle \Vqw \gamma \gamma \pN \ar[r]_{\hspace{-.15in}d} & \V{\gamma - \nu} \otimes \displaystyle \Vq \nu \pN
}
\end{equation*}
so to show the claim it suffices to show that
\begin{equation} \label{eq:Wahl's 4}
pr\big( \Fo.(v_{\gamma - \nu} \otimes v_\nu) \big) = pr\big(  v_{\gamma - \nu} \otimes \Fo.v_\nu  \big).
\end{equation}
Indeed, consider the coexpansion
\begin{equation*}
\Fo.\big(  v_{\gamma - \nu} \otimes v_\nu  \big) = \sum \Foo.v_{\gamma - \nu} \otimes \Fot.v_\nu,
\end{equation*}
where we may assume that $\Foo$, $\Fot$ are monomials in some chosen ordering of $\pos$. If $ \Fot \neq \Fo $ then the degree of $\Fot$ is $< \pN$ and hence the term $\Foo.v_{\gamma - \nu} \otimes \Fot.v_\nu$ is in the kernel of $pr$. Thus (\ref{eq:Wahl's 4}) holds and the claim follows.

Now, by (\ref{eq:Wahl's 3}) we have
\begin{align*}
\Fo.f(v_0) &= \Fo.\Big [\big(  (\id \otimes h) \circ d  \big)( v_0 ) \Big] \\
&= \Fo.\big(  v_{\gamma - \nu} \otimes h(u_0)  \big).
\end{align*}
By assumption this is nonzero and the implication (4) $\implies$ (3) follows.
\end{proof}

\subsection{The fusion product} \label{sub:fusion}

In this section we give a brief outline of the connection between the module $\VVa \lambda \mu$ and the fusion product for hyper current algebras. This will not be used in any other part of the paper, but the connection seems to be of interest so we have included it here. We first describe the fusion product in characteristic 0 and then move to positive characteristic.

None of the material in this section is original; references are \cite{FeKostka} (where the fusion product was first defined over $\C$) and \cite{MoHyperLoop} (for the positive-characteristic theory).

\subsubsection{The fusion product in characteristic 0} \label{subsub:char 0 fusion}

Consider a $\C$-form $\gc$ of $\g$. We have the \textbf{current algebra} $\cgc := \gc \otimes \C[t]$, which is a Lie algebra with bracket given by
\begin{equation}
[X \otimes f, Y \otimes g] = [X, Y] \otimes fg
\end{equation}
for $X, Y \in \gc$ and $f, g \in \C[t]$. For $z \in \C$ there is a natural evaluation morphism $\evv z : \cgc \twoheadrightarrow \g$ via $t \mapsto z$.

For $\lambda \in \dom$ let $\VC \lambda$ be the Weyl (= simple) module for $\gc$ of highest weight $\lambda$. Choose $z \in \C$. Pulling back by the evaluation map $ \evv z $ we obtain a simple $ \cgc $-module which we will denote by $ \VCc z \lambda $. Given $n \geq 0$, $\bm \lambda = (\lambda_1, \cdots, \lambda_n) \in (\dom)^n$ and $\bm z = (z_1, \ldots, z_n) \in \C^n$ we obtain a tensor product $\cgc$-module
\begin{equation}
V^\C_{\bm z}(\bm \lambda) := \VCc{z_1}{\lambda_1} \otimes \cdots \otimes \VCc{z_n}{\lambda_n}
\end{equation}
which is simple if the $z_i$ are pairwise distinct.

Denote by $ \ucgc $ the enveloping algebra of $ \cgc $. The grading on $\cgc$ by polynomial degree induces an algebra grading $\ucgcg n$ on $\ucgc$. Let $U(\gc)$ denote the enveloping algebra of $\gc$; then we have $ U(\gc) = \ucgcg 0 \subseteq \ucgc $ and $ \ucgc $ is naturally a graded $U(\gc)$-algebra. For $m \geq 0$ set
$$  \ucgcf m := \bigoplus_{n=0}^m \ucgcg n.  $$
(Warning: Although this notation is similar to our notation $\bargf m$ introduced in the main part of the paper above, the filtrations are quite different. Do not confuse the polynomial filtration $\ucgcf m$ with the usual degree filtration on an enveloping algebra.)

Let $v \in V^\C_{\bm z}(\bm \lambda)$ be a nonzero highest-weight vector. Then we have a filtration
\begin{equation}
\ucgcf m.v \subseteq V^\C_{\bm z}(\bm \lambda)
\end{equation}
of $V^\C_{\bm z}(\bm \lambda)$ by $U(\gc)$-submodules. We now set
\begin{equation}
\VCc{z_1}{\lambda_1} * \cdots * \VCc{z_n}{\lambda_n} := \gr \, V^\C_{\bm z}(\bm \lambda).
\end{equation}
It is straightforward to check that this is not just a graded $U(\gc)$-module but also a graded $\ucgc$-module. We call this module a \textbf{fusion product} module. Remark that if we forget the grading of this module, it follows by semisimplicity of the finite-dimensional representation category of $ U(\gc) $ in characteristic 0 that the fusion product module is isomorphic as a $U(\gc)$-module to $ V_{\bm \lambda}(\bm a) $. (These modules clearly are not isomorphic as $\ucgc$-modules though.)

Further, the action of $\cgc$ on $\VCc{z_1}{\lambda_1} * \cdots * \VCc{z_n}{\lambda_n}$ factors through the action of the quotient Lie algebra $ \gc \otimes ( \C[t] / t^n ) $. In particular, when $n=2$ we have an induced action by the enveloping algebra $U\big(  \gc \otimes (\C[t]/t^n)  \big)$ of the Lie algebra $\gc \otimes ( k[t] / t^2 )$. This enveloping algebra is naturally isomorphic to the smash product algebra $ \hy(\gc) \# U(\gc) $.

\subsubsection{The fusion product in positive characteristic}

We now give a hyperalgebraic version of the above in positive characteristic. The  difficulty is that we cannot use enveloping algebras so some care must be taken to construct the appropriate hyperalgebraic version of $\ucgc$.

First, there is a $\Z$-lattice $ \barcgz \subseteq \ucgc $ which is the current algebra analog of Kostant's $\Z$-form $\bargz$ of the enveloping algebra; see \cite{MoHyperLoop} for the details of this construction. In particular, the polynomial-degree grading on $ \ucgc $ restricts to a grading on $ \barcgz $ such that $\bargz$ is the degree-0 subalgebra. There is also a natural evaluation morphism
$$  \barcgz \twoheadrightarrow \bargz \otimes_\Z \Z[t].  $$

Recall our base field $k$ of positive characteristic. Base-changing $\barcgz$ to $k$ we obtain an algebra
\begin{equation}
\barcg := \barcgz \otimes_\Z k
\end{equation}
called the \textbf{hyper current algebra} of $G$. $ \barcg $ is graded by polynomial degree and contains $\barg$ as the degree-0 subalgebra. We also have the base-changed evaluation morphism
\begin{subequations} 
\begin{equation}
\ev : \barcg \twoheadrightarrow \barg \otimes k[t].
\end{equation}
For any $z \in k$, composing with polynomial evaluation at $z$ gives a composite map
\begin{equation}
\evv z : \barcg \twoheadrightarrow \barg.
\end{equation}
\end{subequations}
We may now use the evaluation map to pull representations of $\barg$ back to representations of $ \barcg $. In particular, for $\lambda \in \dom$ and $ z \in k $,  pulling back the Weyl module $\V \lambda$ by $\evv z$ gives the module $ V_z(\lambda) $ for $ \barcg $.

For $n \geq 0$, $\bm \lambda = (\lambda_1, \cdots, \lambda_n) \in (\dom)^n$ and $\bm z = (z_1, \ldots, z_n) \in k^n$ we now emulate the characteristic 0 construction and obtain a pullback tensor product module
\begin{equation}
V_{\bm z}(\bm \lambda) := V_{z_1}(\lambda_1) \otimes V_{z_n}(\lambda_n)
\end{equation}
for $\barcg$. Next, we construct a filtration on this module in a manner analogous to the characteristic 0 construction. Taking the associated graded module we obtain the fusion product $\barcg$-module
\begin{equation}
V_{z_1}(\lambda_1) * \cdots * V_{z_n}(\lambda_n) := \gr \, V_{\bm z}(\bm \lambda)
\end{equation}
which is also a graded $\barg$-module. As in the characteristic 0 case, one can check that the $ \barcg $-module structure factors through the action of the quotient algebra $ \bar U( \g[t] / t^n ) $ (where $ \bar U( \g[t] / t^n ) $ denotes an appropriate positive-characteristic hyperalgebraic version of the algebra $U\big(  \gc \otimes (\C[t]/t^n)  \big)$ from \S\ref{subsub:char 0 fusion}.) When $n=2$ this algebra is isomorphic to $ \shyg $. Taking our fusion parameters to be $z_1 = 1$ and $z_2 = 0$, it is easy to check that there is an isomorphism
\begin{equation}
V_1( \lambda ) * V_0(\mu) \cong \VVa \lambda \mu
\end{equation}
of graded $\shyg$-modules.

\section{Computations in $G_2$} \label{sec:G2}

\subsection{Introduction}

We first recall some general background about bases for PBW-graded modules from \cite{FeOrb}. Remark that although our goal is to use these results in type $G_2$, the results in \S\ref{subsub:FeOrb} and \S\ref{subsub:ess} are valid in all types.

\subsubsection{} \label{subsub:FeOrb}

Recall that $N = |\pos|$. Fix an ordering $\beta_1, \ldots, \beta_N$ of $\pos$ such that $ \beta_i < \beta_j $ implies $i > j$. For any $N$-tuple $\s = (s(\beta)) \in \N^\pos$ set
\begin{equation}
F^\s := \prod_{\beta \in \pos} \F \beta{s(\beta)} \in \barnm
\end{equation}
and
\begin{equation}
f^\s := \prod_{\beta \in \pos} \f \beta{s(\beta)} \in \hynm.
\end{equation}
Note that there is an obvious additive structure on $\N^\pos$. Also set
$$ \deg \s := \sum_{\beta \in \pos} s(\beta), $$
the degree of the monomials $f^\s$ and $F^\s$.

Note that section multiplication $ \ho \lambda \otimes \ho \mu \to \ho{\lambda + \mu} $ induces a well-defined degenerate section multiplication map $ \hoa \lambda \otimes \hoa \mu \to \hoa{\lambda + \mu} $. Recall from Remark \ref{rem:tangent bundle} above that for all $\lambda \in \dom$ the cyclic $\hynm$-action on $\Va \lambda$ induces a natural $U$-module inclusion
$$ i_\lambda : \hoa \lambda \hookrightarrow \symn. $$
(Here we are forgetting the torus action so we may drop the twist by $ \chiw \lambda $ which would make this into a $B$-module morphism.) By Hopf duality, degenerate section multiplication is compatible with multiplication in $\symn$ under this map. 

For any $n \geq 0$ we get a composite map
\begin{equation} \label{eq:j}
j^n_\lambda : \hof n \lambda \twoheadrightarrow \hoa \lambda_n \stackrel {i_\lambda} \hookrightarrow \symn
\end{equation}
with kernel $\hos n \lambda$. (See Lemma \ref{lem:FeOrb} below for an explicit description of this map in terms of coordinates.) Compatibility of multiplication implies that for $m,n \geq 0$ and $\lambda, \mu \in \dom$ there is a commutative diagram
\begin{equation} \label{eq:mult}
\xymatrix{
\hof n \lambda \otimes \hof m \mu \ar[r] \ar[d]_{ j^n_\lambda \otimes j^m_\mu } & \hof{ m+n }{\lambda + \mu} \ar[d]^{ j^{m+n}_{\lambda + \mu} } \\
\symn \otimes \symn \ar[r] & \symn,
}
\end{equation}
of $U$-modules, where the horizontal arrows are multiplication.

Consider the monomial basis for $\symn$ dual to our usual monomial basis for $\hynm$ generated by the $\f \beta n$. Equivalently, this is the monomial basis given by the algebra generators $ \{ x_\beta \}_{\beta \in \pos} $ for $\symn$, where $ x_\beta $ is the element dual to $f_\beta \in \hynm$. Note that these generators span a subspace of $\symn$ which is $B$-module isomorphic to $\n$. For $\s \in \N^\pos$ we denote by $x^\s \in \symn$ the obvious monomial.

\subsubsection{Essential bases} \label{subsub:ess}

Recall our fixed ordering of $\pos$ from above. Define a total order on $\N^\pos$ as follows. If $\deg \s < \deg \t$ then we set $\s < \t$. If $\deg \s = \deg \t$ then we use the reverse lexicographic ordering coming from our chosen ordering of $\pos$ given above. (Thus higher powers of lower-height roots are higher in the ordering.) For $\lambda \in \dom$ define the set $\es(\lambda)$ of \textbf{essential multiindices} as follows:
\begin{equation}
\es(\lambda) := \big \{  \s \in \N^\pos : F^\s.v_\lambda \notin \tr{span} \, \{ F^\t.v_\lambda : \t < \s \}  \big \}.
\end{equation}
Since the total order on $\N^\pos$ refines the degree ordering, it follows that the set
$$  \{  f^\s.\va \lambda : \s \in \es(\lambda)  \}  $$
is a basis of $ \Va \lambda $. As a consequence,
$$ \{ F^\s.v_\lambda : \s \in \es(\lambda) \tr{ and deg} \, \s \leq n \} $$
is a basis of $\Vf n \lambda$ for all $n \geq 0$.

Let
$$ \{ \xi_\lambda(\s) : \s \in \es(\lambda^*) \} $$
be the basis of $ \ho \lambda $ dual to the basis $ \{ F^\s.v_{\lambda^*} : \s \in \es(\lambda^*) \} $ of $\V {\lambda^*}$. Then
$$  \{ \xi_\lambda(\s) : \s \in \es(\lambda^*) \tr{ and deg} \, \s \geq n \}  $$
is a basis of $ \hof n \lambda $ for all $n \geq 0$. In the sequel, whenever we write $ \dw \lambda \s $ for some $\lambda \in \dom$ and multiindex $\s$, this will implicitly indicate that $\s \in \es(\lambda)$.

For $\lambda \in \dom$ let $\eta_\lambda : \ho \lambda \otimes \V {\lambda^*} \to k$ be the $G$-equivariant duality pairing. We need the following result from \cite{FeOrb} which we state in terms of the map $j_\lambda^n$ from (\ref{eq:j}).

\begin{lemma}[\cite{FeOrb}, Lemma 2.6] \label{lem:FeOrb}
Choose $\lambda \in \dom$ and $\s \in \es(\lambda^*)$. Then
\begin{equation}
j_\lambda^{\deg \s}\big(  \xi_\lambda(\s)  \big) = \sum \eta_\lambda\big(  \xi_\lambda(\s) \otimes F^\t.v_{\lambda^*}  \big) \cdot x^\t,
\end{equation}
where this sum is taken over all $\t \in \N^\pos$ with $\deg \t = \deg \s$ and $ \t \geq \s $. Further, $x^\t$ appears with coefficient 1 in the above expression, and $\t$ is the only essential multiindex from $\es(\lambda^*)$ that appears with nonzero coefficient.
\end{lemma}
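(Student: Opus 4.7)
The plan is to unwind the definition of $j_\lambda^{\deg\s}$ via duality and then read off each coefficient using the characterization of the essential basis from \S\ref{subsub:ess}.

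First I would recall that $i_\lambda$ is, by construction, the transpose of the cyclic $B$-surjection $\hynm \twoheadrightarrow \Va{\lambda^*}$, $f \mapsto f.\va{\lambda^*}$. Under the Hopf duality pairing $\symn \otimes \hynm \to k$ the monomial basis $\{x^\t\}$ is dual to $\{f^\t\}$, so for any $\xi \in \hoa\lambda_n$ one has $i_\lambda(\xi) = \sum_{\deg\t = n} \xi(f^\t.\va{\lambda^*})\,x^\t$. The quotient $\hof n\lambda \twoheadrightarrow \hoa\lambda_n$ is the transpose of the inclusion realizing $\Va{\lambda^*}_n$ as $\Vf n{\lambda^*}/\Vs n{\lambda^*}$, so evaluating the image of $\xi_\lambda(\s)$ on $f^\t.\va{\lambda^*}$ amounts to lifting to $F^\t.v_{\lambda^*} \in \Vf{\deg\s}{\lambda^*}$ and applying $\eta_\lambda$. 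This lift is unambiguous because $\xi_\lambda(\s) \in \hof{\deg\s}\lambda$ annihilates $\Vs{\deg\s}{\lambda^*}$ by (\ref{eq:hof}). Consequently the coefficient of $x^\t$ in $j_\lambda^{\deg\s}(\xi_\lambda(\s))$ equals $\eta_\lambda(\xi_\lambda(\s)\otimes F^\t.v_{\lambda^*})$ for every $\t$ with $\deg\t = \deg\s$.

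Next I would pin down which $\t$ contribute by expanding each $F^\t.v_{\lambda^*}$ in the essential basis $\{F^\u.v_{\lambda^*}:\u \in \es(\lambda^*)\}$ of $\V{\lambda^*}$. When $\t \in \es(\lambda^*)$ the pairing is the Kronecker delta $\delta_{\s,\t}$, contributing coefficient $1$ at $\t = \s$ and $0$ at every other essential $\t$. When $\t \notin \es(\lambda^*)$, iterated use of the defining relation $F^\t.v_{\lambda^*} \in \tr{span}\{F^\u.v_{\lambda^*}:\u < \t\}$ produces an expansion $F^\t.v_{\lambda^*} = \sum_{\u \in \es(\lambda^*),\,\u < \t} c_\u F^\u.v_{\lambda^*}$; pairing against $\xi_\lambda(\s)$ then picks out $c_\s$, which is nonzero only when $\s < \t$. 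Combining the two cases yields the index restriction $\t \geq \s$, the coefficient $1$ at $\t = \s$, and the fact that $\s$ is the only essential multiindex occurring with nonzero coefficient.

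The one technical step requiring care is the iterated rewriting in the non-essential case: one has to verify that the process terminates in essential monomials indexed by $\u < \t$ and that no intermediate step escapes the filtration level $\Vf{\deg\s}{\lambda^*}$ on which $\xi_\lambda(\s)$ is defined. Both assertions follow from the fact that the total order on $\N^\pos$ refines the degree order, so each substitution strictly decreases the index and cannot raise the degree above $\deg\s$; this also guarantees well-orderedness and hence termination of the rewriting procedure.
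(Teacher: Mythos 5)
Your argument is correct and is exactly the ``follows from the definitions plus triangularity'' proof the paper has in mind: unwind $j_\lambda^{\deg\s}$ as the transpose of $f\mapsto f.\va{\lambda^*}$ to identify the coefficient of $x^\t$ with $\eta_\lambda\big(\xi_\lambda(\s)\otimes F^\t.v_{\lambda^*}\big)$, then expand each non-essential $F^\t.v_{\lambda^*}$ (by well-founded induction on the total order, which refines degree) into essential monomials with strictly smaller index, so that only $\t\geq\s$ survive, with coefficient $1$ at $\t=\s$ and no other essential index contributing; note the paper itself imports this statement from \cite{FeOrb}, Lemma 2.6 without proof. One minor remark: your worry about ``escaping the filtration level'' is vacuous, since $\xi_\lambda(\s)$ is a globally defined functional on $\V{\lambda^*}$ and the pairing with $F^{\u}.v_{\lambda^*}$ makes sense for every $\u$; the well-ordering point is only needed to see that the rewriting terminates in essential indices below $\t$.
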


Remark that the main point of the above lemma is that the sum is taken only over $\t \geq \s$. The rest follows from the definitions.

\begin{remark}
Our definition of the elements $\xi_\lambda(\s)$ differs slightly from the definition in \cite{FeOrb} -- here we are using divided powers whereas non-divided powers are used there. In particular, it follows from (\ref{eq:mult}) that in our case the coefficient of $ \xi_{\lambda + \mu}(\s + \t) $ in a dual basis expansion of $ \xi_\lambda(\s) \cdot \xi_\mu(\t) $ is $1$. On the other hand, this coefficient is \emph{not} equal to 1 in \cite{FeOrb}, c.f. the proof of Lemma 2.8 there. In characteristic 0 (which is the setting of \cite{FeOrb}) either definition is fine, and the non-divided power definition is more natural; however, in positive characteristic, we must use the divided-power definition.
\end{remark}

\subsubsection{Bases for Weyl modules in type $G_2$} \label{subsub:ess G2}

For convenience we copy the table of essential multiindices in type $G_2$ (due to Gornitsky \cite{Gor}) from \S3.3 of \cite{FeOrb}. Order the 6 positive roots as follows:
\begin{equation*}
\beta_1 = \r{11122}, \; \beta_2 = \r{1112}, \; \beta_3 = \r{112}, \; \beta_4 = \r{12}, \; \beta_5 = \r2, \; \beta_6 = \r1.
\end{equation*}
Let $\omega_1, \omega_2$ denote the fundamental weights for $G$. For $k, l \geq 0$ set $\lambda := k \omega_1 + l \omega_2$. Then the set $\es(\lambda) \subseteq \N^\pos = \N^6$ of essential multiindices is the set of 6-tuples $\s = (s_i)_{i=1}^6$ given by:
\begin{gather*}
s_5 \leq l, \; s_6 \leq k, \\
s_2 + s_3 + s_6 \leq k + l, \; s_3 + s_4 + s_6 \leq k+l, \; s_4 + s_5 + s_6 \leq k+l, \\
\sum_{i=1}^5 s_i \leq k + 2l, \; \sum_{i=2}^6 s_i \leq k + 2l.
\end{gather*}

We will often use the above table in the sequel without explicit mention. For $\lambda \in \dom$, we will denote dual basis elements $\xi_\lambda(-)$ with arguments from $\es(\lambda)$ considered as a subset of $\N^6$. We denote the algebra generators $x_\beta$ of $\symn$ described above by $ x_{11122} := x_{\alpha_{11122}} $, $ x_{1112} := x_{\alpha_{1112}} $, etc.

\subsection{The construction in type $G_2$}

\subsubsection{Setup}

Assume that $p \geq 11$ and that $G$ is of type $G_2$. Recall that in this case $N = 6$. Set $\theta := 3\omega_1 + \omega_2$ and $\nu := (p-1)( \omega_1 + 2\omega_2 )$. Set $W := \V \theta^{\otimes p-1}$. Recall that $\gamma = \tprho$. In the sequel we will frequently use our convention on twisting $B$-module morphisms by characters, c.f. \S\ref{sub:setup} above. Note that for all $\mu \in \dom$ we have $\mu = \mu^*$.

Here is the idea behind the construction. The goal is to construct a $B$-module morphism
\begin{equation}
h :  \Vqw \nu \gamma \pN \to W
\end{equation}
that satisfies condition (4) in Theorem \ref{th:Wahl's}. We will first construct a map $\hs$ and dualize it to get $h$.

Remark that we take $p \geq 11$ because this implies $\theta$ is a $p$-restricted weight and hence $\V \theta$ is a self-dual simple Weyl module; this will be a necessary component of the construction.

\subsubsection{The map $\hs$} \label{subsub:h}

Recall the ordering of $\pos$ given in \S\ref{subsub:ess G2} above and the maps $j^n_\mu$ from (\ref{eq:j}). As above set $\theta = 3\omega_1 + \omega_2$, $\nu = (p-1)(\omega_1 + 2\omega_2)$ and $\gamma = \tprho$. Denote by $\bm{p-1} \in \N^6 = \N^\pos$ the constant multiindex with all coefficients equal to $p-1$. By the \textbf{weight} of a multiindex $\t$ we will mean the weight of the element $x^\t \in \symn$.

\begin{proposition} \label{pr:hs}
There is a $B$-module morphism
$$  \hs : \V \theta^{\otimes p-1} \to \hof \pN \nu \otimes \chim \gamma $$
such that $\xi_\nu(\bm{p-1})$ occurs with nonzero coefficient in the dual PBW basis expansion of some element in $\tr{im}(\hs)$.
\end{proposition}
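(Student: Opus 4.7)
The plan is to build $\hs$ as a composition: construct a single-factor $B$-module morphism $\fop$, then tensor $(p-1)$ times and apply iterated section multiplication. Set $\mu_0 := \omega_1 + 2\omega_2$, so that $(p-1)\mu_0 = \nu$. The constant multiindex $\bm 1 = (1,1,1,1,1,1) \in \N^\pos$ lies in $\es(\mu_0)$ (a direct check against the Gornitsky conditions of \S\ref{subsub:ess G2} with $k=1$, $l=2$) and has $\deg \bm 1 = 6 = N$, so $\xi_{\mu_0}(\bm 1) \in \hof N {\mu_0}$. Since $\sum_{i=1}^6 \beta_i = 2\rho$, the element $\xi_{\mu_0}(\bm 1)$ has weight $-\mu_0 + 2\rho = \omega_1$, and therefore $\xi_{\mu_0}(\bm 1) \otimes a_{-2\rho}$ has weight $-\mu_0$ inside $\hof N {\mu_0} \otimes \chim{2\rho}$.

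The crucial first step is to construct a $B$-module morphism
\[
\fop : \V \theta \longrightarrow \hof N {\mu_0} \otimes \chim{2\rho}
\]
whose image contains an element with $\xi_{\mu_0}(\bm 1)$ having nonzero coefficient in its dual PBW basis expansion. One first checks that $\V \theta$ has a nonzero $-\mu_0$-weight space by an explicit $G_2$ computation: using $\omega_1 = 2\alpha_1 + \alpha_2$ and $\omega_2 = 3\alpha_1 + 2\alpha_2$, we get $-\mu_0 = \theta - 17\alpha_1 - 10\alpha_2$, and one verifies this weight occurs in $\V \theta$. The map $\fop$ is then specified by choosing a weight vector $w \in \V \theta$ of weight $-\mu_0$, sending it to a suitable scalar multiple of $\xi_{\mu_0}(\bm 1) \otimes a_{-2\rho}$, and extending $B$-equivariantly. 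The hypothesis $p \geq 11$ (ensuring $\theta$ is $p$-restricted and hence $\V \theta$ is a simple self-dual Weyl module) is the key input that guarantees such a $\fop$ exists with image in the vanishing subspace $\hof N {\mu_0} \otimes \chim{2\rho}$.

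Given $\fop$, define
\[
\hs := \tr{mult} \circ \fop^{\otimes (p-1)} : \V \theta^{\otimes (p-1)} \longrightarrow \hof \pN \nu \otimes \chim \gamma,
\]
where $\tr{mult}$ is induced by iterated section multiplication $\ho{\mu_0}^{\otimes (p-1)} \to \ho{(p-1)\mu_0} = \ho \nu$. Section multiplication is compatible with the vanishing filtrations (orders of vanishing at $eB$ add under multiplication), so the image of $\hs$ lies in $\hof \pN \nu \otimes \chim \gamma$. To verify the coefficient claim, compute the dual PBW expansion of $\xi_{\mu_0}(\bm 1)^{p-1} \in \ho \nu$. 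By the commutative diagram (\ref{eq:mult}),
\[
j^\pN_\nu\!\left( \xi_{\mu_0}(\bm 1)^{p-1} \right) = j^N_{\mu_0}\!\left( \xi_{\mu_0}(\bm 1) \right)^{p-1} \quad \text{in } \symn,
\]
and by Lemma \ref{lem:FeOrb} the factor inside the $(p-1)$-st power equals $x^{\bm 1} + \sum_{\t > \bm 1,\, \deg \t = 6} c_\t x^\t$. Since reverse lexicographic order is translation-invariant under addition of multiindices, any product term $\prod_j x^{\tau_j}$ in the expansion with some $\tau_j \neq \bm 1$ contributes a monomial strictly greater than $\bm{p-1}$ in reverse lex; thus $x^{\bm{p-1}}$ arises only from the all-$\bm 1$ choice and has coefficient $1$. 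Because $\bm{p-1} \in \es(\nu)$, Lemma \ref{lem:FeOrb} then forces $\xi_\nu(\bm{p-1})$ to appear with coefficient $1$ in the dual PBW expansion of $\xi_{\mu_0}(\bm 1)^{p-1}$. Hence $\hs(w^{\otimes (p-1)})$ has $\xi_\nu(\bm{p-1})$ appearing with nonzero coefficient.

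The main obstacle is the first step: constructing $\fop$ explicitly and proving its image is contained in $\hof N {\mu_0} \otimes \chim{2\rho}$ with the required $\xi_{\mu_0}(\bm 1)$-coefficient. This will require careful analysis of weight multiplicities in $\V \theta$ and of the $B$-module structure on $\hof N {\mu_0} \otimes \chim{2\rho}$, where the simplicity and self-duality of $\V \theta$ (valid precisely because $p \geq 11$) is the essential technical input. Once $\fop$ is in hand, the tensor/multiplication step and the coefficient check follow cleanly from Lemma \ref{lem:FeOrb} and the translation-invariance of the reverse lex order.
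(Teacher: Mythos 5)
There is a genuine gap, and it sits exactly at the step you yourself flag as "the main obstacle": the single-factor map $\fop : \V\theta \to \hof N{\omega_1+2\omega_2} \otimes \chim{2\rho}$ is never constructed, and the method you propose for constructing it is not valid. A $B$-module morphism out of the Weyl module $\V\theta$ cannot be specified by "choosing a weight vector $w$ of weight $-\mu_0$, sending it to a multiple of $\xi_{\mu_0}(\bm 1)\otimes a_{-2\rho}$, and extending $B$-equivariantly": since $-\mu_0 = -\theta+\alpha_1$ is not the lowest weight, $w$ does not generate $\V\theta$ over $B$, so prescribing its image does not determine a map; and conversely $\V\theta$ \emph{is} $B$-generated by the lowest-weight vector $v_{-\theta}$, so any $B$-morphism is forced on the $B$-submodule generated by $w$ by whatever happens to $v_{-\theta}$, and existence requires exhibiting an image of $v_{-\theta}$ in the target killed by the appropriate divided powers $\E\beta{\langle\theta,\beta^\vee\rangle+1}$ (this is the criterion, via Proposition 4.3.1 of \cite{BK}, that the paper actually verifies). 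Your appeal to $p\geq 11$ (simplicity and self-duality of $\V\theta$) as "the key input guaranteeing $\fop$ exists" does not substitute for this: self-duality plays no role in producing a morphism into the filtered piece $\hof N{\mu_0}\otimes\chim{2\rho}$ (in the paper it is used only later, to dualize $\hs$ into $h$). So the heart of the proposition is asserted, not proved.

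For comparison, the paper's construction avoids this issue by defining everything on lowest-weight vectors: it sends $v_{-\theta}\mapsto a_1\otimes a_2$ with $a_1=\dwo2{0,0,1,0,1,0}$, $a_2=\dwo2{0,1,0,0,1,0}$ in $\hoof22^{\otimes 2}\otimes\chi_{-\omega_1-2\omega_2}$, checks the required annihilation conditions by weight considerations, tensors $p-1$ copies together with the highest-weight line $\hoop{2(p-1)}1$, and multiplies sections into $\hof\pN\nu\otimes\chim\gamma$. Crucially, the element of the image exhibiting $\xi_\nu(\bm{p-1})$ is \emph{not} the image of the lowest-weight vector (whose $j$-image is the monomial $\underline x$, which is not $x^{\bm{p-1}}$) but rather $\hs\big(\E1{p-1}.\vlp\big)$, detected by a differential-operator computation in $\symn$ plus Lemma \ref{lem:FeOrb}. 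Your downstream bookkeeping (multiplicativity, translation-invariance of the order, essentiality of $\bm 1$ for $\omega_1+2\omega_2$ and of $\bm{p-1}$ for $\nu$) is fine in spirit and consistent with the divided-power multiplicativity remark after Lemma \ref{lem:FeOrb}, but it additionally assumes $\fop(w)$ equals a scalar multiple of $\xi_{\mu_0}(\bm 1)\otimes a_{-2\rho}$ on the nose; if a legitimately constructed $\fop$ produced further dual-basis terms of the same weight and degree, you would also have to control cross terms in the $(p-1)$-fold product, especially since cancellations in characteristic $p$ are a real concern. As it stands, the proposal defers precisely the content of the proposition.
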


\begin{proof}

We split the proof up into a series of steps. 

\begin{itemize}[leftmargin=*]
\item Define the following vectors in $\hoof22$:
	\begin{itemize}
	\item $ a_1 := \dwo2{0,0,1,0,1,0} $, a vector of weight $ -\alpha_1 $; and
	\item $ a_2 := \dwo2{0,1,0,0,1,0} $, a vector of weight $0$.
	\end{itemize}
By Lemma \ref{lem:FeOrb}, we have $ j_{\omega_2}^2(a_1) = x_3x_5 $ and $ j_{\omega_2}^2(a_2) = x_2x_5 $. Indeed, there are no degree-2 multiindices of weight $2 \alpha_1 + 2 \alpha_2$ higher than $(0,0,1,0,1,0)$ in our total order on $\N^6$, and similarly there are no degree-2 multiindices of weight $3 \alpha_1 + 2\alpha_2$ higher than $(0,1,0,0,1,0)$.

\item Note that $\hoo2 = \g$. By weight considerations it is straightforward to check that we have the following identity in $ \hoof22^{\otimes 2}$:
\begin{align*}
\E14.(a_1 \otimes a_2) = \E22.(a_1 \otimes a_2) = \E{12}3.(a_1 \otimes a_2) &= \\
\E{112}4.(a_1 \otimes a_2) = \E{1112}3.(a_1 \otimes a_2) = \E{11122}2.(a_1 \otimes a_2) & = 0.
\end{align*}
As the weight of $a_1 \otimes a_2$ is
$$ -\alpha_1 = -\theta + \omega_1 + 2\omega_2 $$
it follows (c.f. Proposition 4.3.1 in \cite{BK}) that we have a $B$-module map
\begin{align} \label{eq:h'}
\V \theta &\to \hoof22^{\otimes 2} \otimes \chi_{-\omega_1 - 2\omega_2} \,,\\
\nonumber v_{-\theta} &\mapsto a_1 \otimes a_2 \,,
\end{align}
where $v_{-\theta} \in \V \theta$ denotes a nonzero lowest-weight vector.

\item Set
$$ v := \dwop{1}{p-1,0,0,0,0,p-1} \in \hoop{2(p-1)}1 .$$
Then $v$ is a highest-weight vector in $\hop1$. Since $(p-1,0,0,0,0,p-1)$ is the only multiindex in $\es\big( (p-1)\omega_1 \big)$ of degree $ \geq 2(p-1) $ it follows that $\hoop{2(p-1)}1$ is the 1-dimensional highest-weight subspace of $\hop1$ spanned by $v$. That is, $ \hoop{2(p-1)}1 \cong \chi_\po $.

\item I claim that $ j_\po^{2(p-1)}(v) = x_{11122}^{p-1} \, x_1^{p-1}$. Since the highest-weight element $v$ is the $(p-1)^{st}$ power of the highest-weight element $v' := \xi_{\omega_1}(1,0,0,0,0,1) \in \hoof21$, it suffices by multiplicativity to show that $ j_{\omega_1}^2(v') = x_{11122} \, x_1 $. This follows by Lemma \ref{lem:FeOrb}: indeed, there are no degree-2 multiindices of weight $ 4 \alpha_1 + 2 \alpha_2 $ higher than $(1,0,0,0,0,1)$ in our total order on $\N^6$.

\item Taking the $(p-1)^{st}$ tensor power of the map (\ref{eq:h'}) and twisting by the trivial $B$-module
$$ \hoop{2(p-1)}1 \otimes \chim{(p-1)\omega_1} \cong \chi_{(p-1)\omega_1} \otimes \chim{(p-1)\omega_1} \cong k $$
we obtain a $B$-module morphism
\begin{align}
h' : \V \theta &\to \hoop{2(p-1)}1 \otimes \hoof22^{\otimes 2(p-1)} \otimes \chim \gamma \,, \\
\nonumber \vlp &\mapsto v \otimes a_1^{\otimes p-1} \otimes a_2^{\otimes p-1}.
\end{align}

\item Denote by $\hs$ the following composite map:
\begin{equation}
\xymatrix{
\V \theta^{\otimes p-1} \ar[d]_{h'} \\
\hoop{2(p-1)}1 \otimes \hoof22^{\otimes 2(p-1)} \otimes \chim \gamma \ar[d]_m \\
\Hof \pN{ (p-1)(\omega_1 + 2\omega_2) } \otimes \chim \gamma \ar@{=}[d] \\
\hof \pN \nu \otimes \chim \gamma
}
\end{equation}
where $m$ is the standard multiplication map on sections. Set
$$ \underline x := x_{11122}^{p-1} \cdot x_{1112}^{p-1} \cdot x_{112}^{p-1} \cdot x_2^{2(p-1)} \cdot x_1^{p-1} \in \symn. $$
By multiplicativity and the explicit description of the images of $v$, $a_1$ and $a_2$ in $\symn$ we have
\begin{align*}
\big( j_\nu^\pN \circ \hs \big) \big( \vlp \big) = \underline x.
\end{align*}
\item Considering the $\barn$-module action on $\symn$ by differential operators, it is straightforward to check that $ x^{\textbf {p-1}} $ occurs with nonzero coefficient in the monomial expansion of
$$ \big( j_\nu^\pN \circ \hs \big)\big(  \E1{p-1}.\vlp  \big) = \E1{p-1}.\underline x \in \symn .$$
By the table in \S\ref{subsub:ess G2} we have $\bm{p-1} \in \es(\nu)$. It follows by Lemma \ref{lem:FeOrb} that $ \dw \nu {\bm{p-1}} $ occurs with nonzero coefficient in $\hs \big(  \E1{p-1}.\vlp  \big)$, as desired.
\end{itemize}
\end{proof}

\subsubsection{The map $h$}
 
The assumption that $p \geq 11$ implies that $\theta$ is a $p$-restricted weight and hence $\V \theta$ is simple and self-dual.
Dualizing the morphism $\hs$ we obtain a morphism
\begin{equation}
h : \Vqw \nu \gamma \pN \to \V \theta^{\otimes p-1}.
\end{equation}
As in condition (4) of Theorem \ref{th:Wahl's}, denote by $u_0 \in \Vq \nu \pN$ the image of $\Fo.v_\nu$ under the projection $ \V \nu \twoheadrightarrow \Vq \nu \pN $.

By Proposition \ref{pr:hs}, $h(u_0)$ is a nonzero element of $\V \theta^{\otimes p-1}$. Note that $ \nu \preceq \gamma $; in particular, $\gamma - \nu = \po$. To apply Theorem \ref{th:Wahl's} and complete the construction in type $G_2$ we need to verify the following lemma.

\begin{lemma}
We have
\begin{equation*}
\Fo.\big(  v_\po \otimes h(u_0)  \big) \neq 0
\end{equation*}
in $\Vbig \po \otimes \V \theta^{\otimes p-1}$.
\end{lemma}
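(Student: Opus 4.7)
My plan is to verify the non-vanishing by reducing to the coefficient computation established in the proof of Proposition \ref{pr:hs}. The strategy proceeds in three stages.

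\emph{First}, I would isolate the second tensor factor by pairing $\Fo.(v_\po \otimes h(u_0))$ against $\tau \otimes (\cdot)$, where $\tau \in \ho \po$ is the highest-weight section normalized so that $\tau(v_\po) = 1$ and $\tau$ vanishes on all $V(\po)$-weight spaces below $\po$. Expanding $\Delta \Fo = \sum \swFo{1} \otimes \swFo{2}$, the pairing becomes $\sum \tau(\swFo{1}.v_\po) \cdot \langle w, \swFo{2}.h(u_0)\rangle$. A weight argument (any nontrivial element of $\barnm$ has strictly negative weight) forces $\swFo{1} = 1$ in any surviving term; and from $\Delta \Fo = \prod_\beta \sum_{k=0}^{p-1} \F \beta k \otimes \F \beta {p-1-k}$, the unique summand with $\swFo{1} = 1$ is $1 \otimes \Fo$. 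This reduces the lemma to showing $\Fo.h(u_0) \neq 0$ in $\V \theta^{\otimes p-1}$.

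\emph{Second}, I would translate via the $G$-invariant self-duality pairing $(\cdot, \cdot)$ on $\V\theta^{\otimes p-1}$, non-degenerate since $\theta$ is $p$-restricted for $p \geq 11$ and hence $V(\theta)$ is simple and self-dual. Then $\Fo.h(u_0) \neq 0$ iff $(w, \Fo.h(u_0)) \neq 0$ for some $w$ of weight $\po$; by $G$-invariance of the pairing and the computation $\sigma(\Fo) = (-1)^{N(p-1)}\Fo = \Fo$ (valid for $p$ odd by order-independence of the norm form $\Fo$ \cite{Hab80} and $\sigma(\F \beta n) = (-1)^n \F \beta n$), this equals $(\Fo.w, h(u_0))$. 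The characterization of $h$ as dual to $\hs$ and of $u_0$ as dual to $\dw \nu {\bm{p-1}}$ in the essential basis then identifies this scalar with the coefficient of $\dw \nu {\bm{p-1}}$ in the essential-basis expansion of $\hs(\Fo.w)$.

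\emph{Third}, I would exhibit $w \in \V\theta^{\otimes p-1}$ of weight $\po$ such that $\Fo.w = \E 1 {p-1}.\vlp$ modulo the kernel of the functional ``coefficient of $\dw \nu {\bm{p-1}}$ in $\hs(\cdot)$''; by Proposition \ref{pr:hs}, this functional is nonzero on $\E 1 {p-1}.\vlp$ (note that both $\Fo.w$ and $\E 1 {p-1}.\vlp$ have weight $-\nu$, so this weight matching is consistent). A natural construction of $w$ would apply a sequence of divided-power lowering operators to $v_\theta^{\otimes p-1}$ to reach the weight-$\po$ subspace, then use $\barg$-commutation relations to verify that $\Fo.w$ produces $\E 1 {p-1}.\vlp$ as the leading contribution. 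The main obstacle is controlling the divided-power binomial coefficients modulo $p$ during these commutations; the assumption $p \geq 11$ ensures that the relevant binomials $\binom{p-1}{k}$ for the small $k$ dictated by $G_2$'s root combinatorics remain nonzero, preventing cancellation.
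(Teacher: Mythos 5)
The reduction in your first stage is where the argument breaks, and it breaks for a concrete reason. Pairing the first factor against the highest-weight functional $\tau$ is only a sufficient criterion, and it discards exactly the mechanism that makes the lemma true: with your projection the whole element $\Fo$ lands on the second factor, so you have reduced the lemma to the claim that $\Fo.h(u_0) \neq 0$ in $\V \theta^{\otimes p-1}$. But the first step of the paper's proof --- a step your proposal never engages with --- identifies $h(u_0)$: since $h(u_0)$ has weight $\nu = (p-1)\theta - (p-1)\alpha_1$, lying on the $\alpha_1$-string through $(p-1)\theta$, it is (up to a nonzero scalar) the vector $\F1{p-1}.v_\pl$ in the Cartan component $\Vbig \pl \subseteq \V \theta^{\otimes p-1}$. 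Granting this identification, your reduced statement is false: writing $\fop := \prod_{\beta \in \pos \setminus \{\alpha_1\}} \F \beta{p-1}$ and using the order-independence of $\Fo$ (\cite{Hab80}), one has $\Fo \cdot \F1{p-1} = \fop \cdot \F1{p-1}\F1{p-1} = \tbinom{2p-2}{p-1}\, \fop\, \F1{2p-2} = 0$ in characteristic $p$ (Lucas), hence $\Fo.h(u_0) = 0$. Your stages 2 and 3 cannot repair this: by the self-duality and invariance you invoke, exhibiting $w$ with $\big( \Fo.w, \, h(u_0) \big) \neq 0$ is equivalent to $\Fo.h(u_0) \neq 0$, so no choice of $w$ can work; and in any case stage 3 is only a plan (``apply lowering operators and control the binomials''), with no $w$ exhibited and no computation carried out.

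The paper's proof makes the opposite choice of projection, and that choice is essential: it extracts the component where the first factor has the extremal weight $\po - (p-1)\alpha_1$, rather than weight $\po$. Since $\alpha_1$ is the only positive root proportional to $\alpha_1$, the unique term of $\Delta \Fo$ contributing to that component is $\F1{p-1} \otimes \fop$, so modulo terms of other left-hand weights $\Fo.\big( v_\po \otimes \F1{p-1}.v_\pl \big) = \F1{p-1}.v_\po \otimes \fop \F1{p-1}.v_\pl = \F1{p-1}.v_\po \otimes \Fo.v_\pl$, and both tensor factors are nonzero: $\F1{p-1}.v_\po \neq 0$ because $\langle \po, \alpha_1^\vee \rangle = p-1$, and $\Fo.v_\pl \neq 0$ because $\bm{p-1} \in \es\big( \pl \big)$. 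In other words, the first factor $\Vbig \po$ must absorb the $\F1{p-1}$-part of $\Fo$ so that only $\fop$ reaches $h(u_0)$; your projection forbids precisely that absorption, which is why your reduction cannot succeed.
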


\begin{proof}
First, note that $h(u_0)$ is an element of $\V \theta^{\otimes p-1}$ of weight
$$\nu = (p-1)( \omega_1 + 2\omega_2 ) = (p-1)\theta - (p-1)\alpha_1 .$$
Since $\nu$ is on the $\alpha_1$-string through $(p-1)\theta$ it follows that the $\nu$-weight space of $\V \theta^{\otimes p-1}$ is 1-dimensional and is contained in the submodule $\Vbig{ \pl }$ of $\V \theta^{\otimes p-1}$. As the $\nu$-weight space of $ \Vbig{ \pl } $ is spanned by $ \F1{p-1}.v_\pl $ we have (up to a nonzero scalar)
\begin{equation*}
h(u_0) = \F1{p-1}.v_\pl \in \Vbig \pl \subseteq \V \theta^{\otimes p-1}.
\end{equation*}
Thus we have reduced to verifying that
\begin{equation} \label{eq:G2 lemma}
\Fo.\big(  v_\po \otimes \F1{p-1}.v_\pl  \big) \neq 0 \tr{ in } \Vbig \po \otimes \Vbig \pl.
\end{equation}

Set
$$ \fop := \prod_{ \beta \in \pos \setminus \{ \alpha_1 \} } \F \beta{p-1}. $$
Then, mod terms of left-hand torus weight different than $\po - (p-1)\alpha_1$, we have
\begin{align*}
\Fo.\big(  v_1 \otimes \F1{p-1}.v_{(p-1)\theta}  \big) &= \F1{p-1}.v_1 \otimes \fop.\F1{p-1}.v_\pl \\
&= \F1{p-1}.v_1 \otimes \Fo.v_\pl.
\end{align*}
By the table in \S\ref{subsub:ess G2} we have that $ \mathbf{p-1} \in \es \big( \pl \big) $. This implies that $\Fo.v_\pl \neq 0$ in $\Vbig \pl$. By that table we also have $\F1{p-1}.v_1 \neq 0$ in $\V \po$ (or just use that $ (p-1)\omega_1(\alpha_1^\vee) = p-1 $). Hence $\F1{p-1}.v_1 \otimes \Fo.v_\pl \neq 0$ and (\ref{eq:G2 lemma}) holds as desired.
\end{proof}

Thus we have proved:
\begin{theorem}
Assume that $p \geq 11$ and that $G$ is of type $G_2$. Then there is a Frobenius splitting of $\fflag$ that maximally compatibly splits $\dfflag$. In particular, Wahl's conjecture holds in this case.
\end{theorem}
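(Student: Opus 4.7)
The plan is to apply criterion (4) of Theorem \ref{th:Wahl's} with the data already prepared above: take $W := \V \theta^{\otimes p-1}$, the dominant weight $\nu := (p-1)(\omega_1 + 2\omega_2)$, and the $B$-module morphism $h : \Vqw \nu \gamma \pN \to W$ obtained by dualizing the map $\hs$ of Proposition \ref{pr:hs}. The dualization step is legitimate because $\V \theta$ is simple and self-dual for $p \geq 11$, since $\theta = 3\omega_1 + \omega_2$ is $p$-restricted in this range.

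First I would check the weight condition $\nu \preceq \gamma$: writing $\gamma = \tprho = 2(p-1)(\omega_1 + \omega_2)$, one computes $\gamma - \nu = (p-1)\omega_1 = \po$, which lies in $\dom$, so indeed $\nu \preceq \gamma$. The remaining hypothesis of criterion (4) is the non-vanishing $\Fo.(v_\po \otimes h(u_0)) \neq 0$ in $\Vbig \po \otimes W$, where $u_0 \in \Vq \nu \pN$ is the image of $\Fo.v_\nu$ under the quotient map; this is precisely the content of the preceding lemma.

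Granted these two verifications, the equivalence (4) $\iff$ (1) of Theorem \ref{th:Wahl's} produces a Frobenius splitting of $\fflag$ that maximally compatibly splits $\dfflag$, which is the first assertion of the theorem. The statement on Wahl's conjecture then follows from part (a) of the same theorem, which combines such a splitting with the Lakshmibai--Mehta--Parameswaran criterion (Proposition 2.4 of \cite{LTmax}) to deduce Wahl's conjecture for every partial flag variety $G/P$. The genuine obstacles in the overall argument were concentrated in the construction of the map $h$ (Proposition \ref{pr:hs}) and the verification of the non-vanishing lemma; the present step is simply a direct assembly of those ingredients through the representation-theoretic criterion.
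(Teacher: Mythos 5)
Your proposal is correct and follows exactly the paper's route: the final theorem is obtained by assembling Proposition \ref{pr:hs}, the dualized map $h$, and the non-vanishing lemma into an application of criterion (4) of Theorem \ref{th:Wahl's}, with the weight check $\gamma - \nu = \po$ and the self-duality of $\V \theta$ for $p \geq 11$ handled just as you describe. Nothing is missing.
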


\newpage
\bibliographystyle{amsplain}
\bibliography{/Users/Charles/Documents/TeX/thebibliography}

\providecommand{\bysame}{\leavevmode\hbox to3em{\hrulefill}\thinspace}
\providecommand{\MR}{\relax\ifhmode\unskip\space\fi MR }
\providecommand{\MRhref}[2]{%
  \href{http://www.ams.org/mathscinet-getitem?mr=#1}{#2}
}
\providecommand{\href}[2]{#2}
\begin{thebibliography}{10}

\bibitem{BK}
M.~Brion and S.~Kumar, \emph{Frobenius splitting methods in geometry and
  representation theory}, Progress in Mathematics, no. 231, Birkh\"auser
  Boston, 2005.

\bibitem{FeKostka}
B.~Feigin and S.~Loktev, \emph{On generalized {K}ostka polynomials and the
  quantum {V}erlinde rule}, Differential topology, infinite-dimensional {L}ie
  algebras, and applications, Amer. Math. Soc. Transl. Ser. 2, vol. 194, Amer.
  Math. Soc., Providence, RI, 1999, pp.~61--79.

\bibitem{FeDeg}
E.~Feigin, \emph{{Degenerate $SL_n$:} representations and flag varieties},
  arXiv:0803.2960.

\bibitem{FeOrb}
\bysame, \emph{Highest weight orbits: {PBW} and toric degenerations},
  arXiv:1306.1292.

\bibitem{FFLPBWZ}
E.~Feigin, G.~Fourier, and P.~Littelmann, \emph{P{BW} filtration over {$\mathbb
  Z$} and compatible bases for {$V_{\mathbb Z}(\lambda)$} in type {$A_n$} and
  {$C_n$}},  (2012), arxiv:1204.1854.

\bibitem{FP}
Eric~M. Friedlander and Brian~J. Parshall, \emph{Rational actions associated to
  the adjoint representation}, Ann. Sci. \'Ecole Norm. Sup. (4) \textbf{20}
  (1987), no.~2, 215--226.

\bibitem{Gor}
A.~Gornitsky, \emph{Essential signatures and canonical bases in irreducible
  representations of the group {$G_2$}}, Diploma thesis (2011).

\bibitem{Hab80}
W.~J. Haboush, \emph{Central differential operators on split semisimple groups
  over fields of positive characteristic}, S\'eminaire d'{A}lg\`ebre {P}aul
  {D}ubreil et {M}arie-{P}aule {M}alliavin, 32\`eme ann\'ee ({P}aris, 1979),
  Lecture Notes in Math., vol. 795, Springer, Berlin, 1980, pp.~35--85.

\bibitem{HaDeg}
C.~Hague, \emph{Degenerate coordinate rings of flag varieties and {F}robenius
  splitting}, to appear in {S}electa {M}athematica.

\bibitem{MoHyperLoop}
Dijana Jakeli{\'c} and Adriano~Adrega de~Moura, \emph{Finite-dimensional
  representations of hyper loop algebras}, Pacific J. Math. \textbf{233}
  (2007), no.~2, 371--402.

\bibitem{KuWahl}
Shrawan Kumar, \emph{Proof of {W}ahl's conjecture on surjectivity of the
  {G}aussian map for flag varieties}, Amer. J. Math. \textbf{114} (1992),
  no.~6, 1201--1220.

\bibitem{LMP}
V.~Lakshmibai, V.~B. Mehta, and A.~J. Parameswaran, \emph{Frobenius splittings
  and blow-ups}, J. Algebra \textbf{208} (1998), 101--128.

\bibitem{LTmax}
Niels Lauritzen and Jesper~Funch Thomsen, \emph{Maximal compatible splitting
  and diagonals of {K}empf varieties}, Ann. Inst. Fourier (Grenoble)
  \textbf{61} (2011), no.~6, 2543--2575 (2012).

\bibitem{MoHopf}
Susan Montgomery, \emph{Hopf algebras and their actions on rings}, CBMS
  Regional Conference Series in Mathematics, vol.~82, Published for the
  Conference Board of the Mathematical Sciences, Washington, DC, 1993.

\bibitem{TWahl}
Jesper~Funch Thomsen, \emph{A proof of {W}ahl's conjecture in the symplectic
  case}, http://arxiv.org/abs/1009.0479, preprint, 2010.

\bibitem{Wahl}
J.~Wahl, \emph{Gaussian maps and tensor products of irreducible
  representations}, Manuscripta Math. \textbf{73} (1991), no.~3, 229--259.

\end{thebibliography}

\end{document}